\numberwithin{equation}{section}
\newtheorem{lemma}[equation]{Lemma}
\newtheorem{prop}[equation]{Proposition}
\newtheorem{cor}[equation]{Corollary}
\newtheorem{claim*}{Claim}
\newtheorem{thm}[equation]{Theorem}
\theoremstyle{definition}
\newtheorem{defn}[equation]{Definition}
\newtheorem{notation}[equation]{Notation}
\newtheorem{example}[equation]{Example}
\newtheorem{construction}[equation]{Construction}
\theoremstyle{remark}
\newtheorem{remark}[equation]{Remark}
\newcommand{\cC}{\mathcal{C}}
\newcommand{\PP}{\mathbb P}
\renewcommand{\P}{\PP}
\newcommand{\ZZ}{\mathbb Z}
\newcommand{\Spec}{\operatorname{Spec}}
\newcommand{\Eff}{\operatorname{Eff}}
\newcommand{\im}{\operatorname{im}}
\newcommand{\id}{\operatorname{id}}
\newcommand{\Tor}{\operatorname{Tor}}
\newcommand{\Hom}{\operatorname{Hom}} 
\newcommand{\kk}{{\bf k}}
\newcommand{\F}{\FF}
\newcommand{\defi}[1]{\textsf{#1}} 
\newcommand{\beq}{\begin{displaymath}}
\newcommand{\eeq}{\end{displaymath}}
\def\nc{\newcommand}
\def\on{\operatorname}
\nc{\Q}{\mathbb{Q}}
\nc{\RR}{\mathbf{R}}
\nc{\LL}{\mathbf{L}}
\nc{\xra}{\xrightarrow}
\nc{\xla}{\xleftarrow}
\def\a{\alpha}
\def\om{\omega}
\def\DM{\operatorname{DM}}
\def\th{\on{th}}
\def\F{\mathcal{F}}
\def\coker{\on{coker}}
\nc{\into}{\hookrightarrow}
\nc{\onto}{\twoheadrightarrow}
\nc{\OO}{\mathcal{O}}
\nc{\Z}{\mathbb{Z}}
\nc{\cA}{\mathcal{A}}
\nc{\w}{\widehat}
\nc{\End}{\on{End}}
\nc{\res}{\frac{1}{x_0x_1}}
\nc{\tF}{\widetilde{F}}
\nc{\tG}{\widetilde{G}}
\nc{\tf}{\widetilde{f}}
\nc{\Com}{\on{Com}}
\nc{\zero}{\mathbf{0}}
\nc{\G}{\mathbb{G}}
\nc{\cG}{\mathcal{G}}
\nc{\cE}{\mathcal E}
\nc{\cF}{\mathcal F}
\nc{\cR}{\mathcal R}
\nc{\cD}{\mathcal D}
\nc{\cB}{\mathcal B}
\nc{\cT}{\mathcal T}
\nc{\cL}{\mathcal L}
\nc{\bM}{\mathbf M}
\nc{\bN}{\mathbf N}
\nc{\U}{\mathbf U}
\nc{\BM}{\mathbf B \mathbf M}
\nc{\Dsg}{\on{D}_{\on{sg}}}
\nc{\fC}{\mathcal{C}}
\nc{\fG}{\mathcal{G}}
\nc{\N}{\mathbb{N}}
\nc{\del}{\partial}
\nc{\cone}{\on{cone}}
\nc{\D}{\on{D}_{\on{diff}}}
\nc{\DMb}{\on{D}^b_{\DM}}
\nc{\Db}{\on{D}^{\on{b}}}
\nc{\Kb}{\on{K}^{\on{b}}}
\nc{\fm}{\mathfrak{m}}
\nc{\Flag}{\on{Flag}}
\nc{\DMmin}{\DM_{\on{min}}}
\nc{\Ddiff}{\on{D}_{\on{diff}}}
\nc{\Dbdiff}{\on{D}^\on{b}_{\on{diff}}}
\nc{\wO}{\widehat{\OO}}
\nc{\wT}{\widehat{T}}
\nc{\from}{\leftarrow}
\nc{\wLL}{\widetilde{\LL}}
\nc{\augCech}{\widetilde{\cC}}
\nc{\Fold}{\on{Fold}}
\nc{\Ext}{\on{Ext}}
\nc{\FF}{\mathbf{F}}
\nc{\Comper}{\Com_{\on{per}}}
\nc{\Unfold}{\on{Unfold}}
\nc{\intHom}{\underline{\Hom}}
\nc{\Ex}{\on{Ex}}
\nc{\tg}{\widetilde{g}}
\nc{\B}{\mathcal{B}}
\nc{\K}{\mathcal{K}}
\nc{\kos}{\on{Kos}}
\nc{\Perf}{\on{Perf}}
\nc{\tR}{\widetilde{\cR}}
\nc{\X}{\mathcal{X}}
\nc{\Cl}{\on{Cl}}
\nc{\fU}{\mathcal{U}}
\nc{\bU}{\mathbf U}
\nc{\st}{\on{st}}
\nc{\coh}{\on{coh}}
\def\D{\mathcal{D}}
\nc{\tU}{\U}
\nc{\bC}{\mathbf{C}}
\nc{\aux}{\on{aux}}
\def\phi{\varphi}
\newcommand{\Manoa}{M\=anoa}
\newcommand{\Hawaii}{Hawai\kern.05em`\kern.05em\relax i}
\newcommand{\UHM}{University of \Hawaii \ at \Manoa}
\title{Linear strands of multigraded free resolutions}
\author{Michael K. Brown}
\address{Department of Mathematics, Auburn University, Auburn, AL}
\email{mkb0096@auburn.edu}
\author{Daniel Erman}
\address{Department of Mathematics, \UHM, Honolulu, HI}
\email{erman@hawaii.edu}
\date{\today}
\begin{document}

\maketitle


\begin{abstract}
We develop a notion of linear strands for multigraded free resolutions, and we prove a multigraded generalization of Green's Linear Syzygy Theorem. 
\end{abstract}
\section{Introduction}
Linear strands of minimal free resolutions over a standard graded polynomial ring play an important role in commutative algebra and projective geometry; see e.g. \cite{EPS, GL, mccullough, RW, SS}, and see \cite[Chapter 7]{eisenbud} for a comprehensive introduction to the subject. The main goal of this paper is to develop a notion of linear strands of resolutions over \defi{multigraded} polynomial rings, by which we mean polynomial rings graded by an arbitrary abelian group.  Our paper parallels other recent homological results in multigraded commutative algebra~\cite{bruce-heller-sayrafi,EES, MS, tate} in the following sense: while the final results are strong analogues of results from the standard graded case, including an analogue of the main result of~\cite{green}, getting to those results requires reconceiving the central definitions and developing novel proof techniques.

Multigraded polynomial rings arise, for instance, as the coordinate rings of toric varieties/stacks~\cite{cox}. One particularly interesting class of examples is given by nonstandard $\ZZ$-graded polynomial rings, which correspond to weighted projective {spaces/stacks}, and our work on linear strands is novel even in this setting.

 A highlight of the classical theory of linear strands is its connection with Green's ``$N_p$-properties" \cite{green2}, which measure the linearity of free resolutions of coordinate rings of curves embedded in $\mathbb P^n$.  This provides a major motivation for the current work, and we specifically aim to provide a foundation for our work on $N_p$-properties for curves in weighted projective spaces \cite{Np}.

This work also adds to the recent overarching program of developing homological tools and results for understanding syzygies in the multigraded setting.  This includes: work of Maclagan-Smith and others on Castelnuovo-Mumford regularity in the multigraded context~\cite{botbol-chardin, positivity, bruce-heller-sayrafi,MS,MS2}; the introduction of virtual resolutions as a framework for studying multigraded syzygies of varieties in~\cite{BES} and the followup results \cite{berkesch-klein-loper-yang,duarte-seceleanu, gllm, hnv, loper,    yang}; generalizations of the Bernstein-Gel'fand-Gel'fand correspondence \cite{baranovsky, bgg,  hhw}, which will be essential for the results in this paper; and extensions of Beilinson's resolution of the diagonal~\cite{beilinson, BS22, tate, CK } and Tate resolutions~\cite{tate, EES, EFS} to toric settings.

\subsection*{Results} To discuss our results in detail, we must fix some notation. Let $\kk$ be a field, and let $S = \kk[x_0, \dots, x_n]$, graded by an abelian group $A$. We denote by $\zero$ the identity in $A$. We will assume that $S$ has a \defi{positive $A$-grading}, by which we mean that $S_\zero = \kk$ and that there exists a homomorphism $\theta\colon A \to \Z$ such that $\theta(\deg(x_i)) > 0$ for all $i$.  Our main example is the case where $S$ is the Cox ring of a projective toric variety $X$ and $A=\Cl(X)$ is the divisor class group of $X$.

Let $M$ be a finitely generated graded $S$-module that is generated in degree $\mathbf{0}$ and $F$ its minimal $A$-graded free resolution (such resolutions always exist; see e.g. \cite[Lemma 3.11(1)]{botbol-chardin}). In the case where $S$ is standard graded, i.e. $A = \Z$ and $\deg(x_i) = 1$ for all $i$, the \defi{linear strand} of $F$ is the subcomplex generated by elements of each $F_i$ of degree $i$. In other words, the linear strand is the subcomplex given by the summands of $F$ that come from the first row of its Betti table;  it has the form $S^{b_0}\gets S(-1)^{b_1} \gets S(-2)^{b_2}\gets \cdots$, and the entries of the differentials are $\kk$-linear combinations of the variables.

To define the linear strand of a multigraded free resolution, we first ask: what does it mean for a multigraded complex to be ``linear"?  A key observation is that the degrees of the generators of the free modules may be insufficient to determine linearity.  As a simple example, take $S=\kk[x_0,x_1]$ with $\deg(x_0)=1$ and $\deg(x_1)=2$, and consider the complexes 
\begin{equation}\label{eqn:lin vs strong lin}
S \overset{\ x_1}{\longleftarrow} S(-2)\qquad \text{ and } \qquad S \overset{\ x_0^2}{\longleftarrow} S(-2) .
\end{equation}
The complexes are identical as modules, but intuitively, the one on the left is ``linear'' while the one on the right is not.  Thus, outside of the standard graded case, our notion of linearity must involve properties of the differentials, not just the underlying free modules.

\begin{defn}
\label{stronglylinear}
We call a morphism $f\colon G \to G'$ of free $A$-graded $S$-modules \defi{strongly linear} if there exist bases of $G$ and $G'$ with respect to which $f$ is a matrix whose entries are $\kk$-linear combinations of the variables. A complex $C$ of free $A$-graded $S$-modules is \defi{strongly linear} if its differentials are such.
\end{defn}

Since strong linearity cannot be detected using Betti numbers, our approach to defining multigraded linear strands is necessarily different from the standard graded case. To guide the way to our definition, we first consider a key property of linear strands in the standard graded setting:

\begin{quote}
When $S$ is standard graded, the linear strand of the free resolution $F$ is the (unique) maximal subcomplex $L$ of $F$ such that $L$ is linear (i.e. each term $L_i$ is generated in degree $i$), and $L$ is a summand of $F$ as an $S$-module.\footnote{This characterization relies on the fact that $M$ is generated entirely in degree $0$; see Remark~\ref{moregeneral}.} 
\end{quote}
\noindent From now on, we will say a subcomplex $C'$ of a complex $C$ of $A$-graded $S$-modules is \defi{quasi-split} if $C'$ is a summand of $C$ as an $S$-module (but not necessarily as a complex).

There is also a useful formula for computing linear strands via the Bernstein-Gel'fand-Gel'fand (BGG) correspondence \cite[Corollary 7.11]{eisenbud}. Our main result is that maximal linear subcomplexes of minimal free resolutions exist and are unique even in the multigraded setting, and that they can also be computed via a multigraded analogue of the BGG correspondence:
\begin{thm}[see Theorem \ref{maxlinear} below]
\label{introthm}
Let $\kk$ be a field, and suppose $S = \kk[x_0, \dots, x_n]$ is positively graded by an abelian group $A$. Let $M$ be a finitely generated graded $S$-module that is generated in degree $\mathbf{0}$ and $F$ its minimal free resolution.
\begin{enumerate}
\item There exists a unique maximal strongly linear, quasi-split subcomplex $L$ of $F$.
\item The complex  $L$ in (1) can be expressed in terms of the Bernstein-Gel'fand-Gel'fand (BGG) correspondence as $L=\LL(K)$, where
$$
K = \ker \left(M_{\zero} \otimes \om_E \xra{\sum_{i = 0}^n x_i \otimes e_i} \RR (M)\right)
$$
is a module over the Koszul dual exterior algebra $E$ of the ring $S$ (see \S\ref{subsec:bgg} for an explanation of the notation here).
\end{enumerate}
\end{thm}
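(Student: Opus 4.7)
The plan is to prove (1) and (2) simultaneously by constructing $L := \LL(K)$ directly and verifying that it is the unique maximal strongly linear quasi-split subcomplex of $F$. The guiding principle is that, via BGG, strongly linear free $S$-complexes correspond to $E$-modules: the functor $\LL$ produces complexes whose differentials are given by $\sum x_i \otimes e_i$ and hence are $\kk$-linear in the variables, so $\LL$ applied to any $E$-module is automatically strongly linear. This reduces the problem to identifying the correct $E$-submodule of $M_\zero \otimes \om_E$.

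I would first observe that $\LL(M_\zero \otimes \om_E)$ is a Koszul-type complex of free $S$-modules whose top term is $S \otimes M_\zero$, augmenting onto $M$. By the comparison theorem for projective resolutions, this extends to a morphism of complexes $\LL(M_\zero \otimes \om_E) \to F$ lifting the identity on $M_\zero$, and restricting along $K \hookrightarrow M_\zero \otimes \om_E$ yields a morphism $\LL(K) \to F$. The defining condition of $K$, that its image in $\RR(M)$ vanishes under $\sum x_i \otimes e_i$, translates on the $S$-side to the statement that the composition $\LL(K) \to F \to M$ factors appropriately and, combined with minimality of $F$, guarantees that the lifted map can be chosen as a termwise split monomorphism of $S$-modules, i.e. quasi-split.

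Conversely, I would show that any strongly linear quasi-split subcomplex $L' \subseteq F$ (necessarily generated in degree $\zero$, since $M$ is generated there) arises as $\LL(K')$ for a unique $E$-submodule $K' \subseteq M_\zero \otimes \om_E$: the quasi-split hypothesis lets us choose bases exhibiting $L'$ as an $S$-summand of each $F_i$ and thereby extract BGG data $K' \subseteq M_\zero \otimes \om_E$, while strong linearity of $L'$ together with the requirement that the differentials of $L'$ agree with those of $F$ forces $K'$ to land inside the kernel $K$. Maximality and uniqueness of $L = \LL(K)$ are then immediate from the maximality of $K$ as a kernel.

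The main technical obstacle is establishing this correspondence between strongly linear quasi-split subcomplexes of $F$ and $E$-submodules of $M_\zero \otimes \om_E$ contained in $K$. In the standard graded setting an analogue appears in \cite[Cor.~7.11]{eisenbud}, but two features of the multigraded case make matters delicate: first, one must work with the canonical module $\om_E$ rather than $E$ itself and carefully track the $A$-grading through the BGG functors $\LL$ and $\RR$; and second, as example \eqref{eqn:lin vs strong lin} makes clear, the quasi-split hypothesis is genuinely needed to rule out ``accidentally linear'' subcomplexes, so the proof must show precisely how quasi-splittedness on the $S$-side corresponds to being a subobject of $M_\zero \otimes \om_E$ (as opposed to a mere subquotient) on the $E$-side. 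Producing explicit bases compatible with both the summand decomposition and the $A$-grading, and checking that the lifting of $\LL(M_\zero \otimes \om_E) \to F$ can be rectified into an honest term-by-term inclusion, is where most of the work will lie.
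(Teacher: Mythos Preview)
Your high-level architecture is right: strongly linear complexes are exactly those of the form $\LL(N)$ for an $E$-module $N$, and the problem reduces to singling out the correct $E$-submodule of $M_\zero \otimes \om_E$. But the execution you sketch has a real gap in Part (1), and your Part (2) is missing the mechanism that actually does the work.

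For Part (1), your plan is to lift $\LL(M_\zero \otimes \om_E) \to F$ via the comparison theorem and then restrict to $\LL(K)$. The comparison theorem only produces a chain map well-defined up to homotopy; it gives you no control over whether the resulting $\LL(K) \to F$ is a termwise split monomorphism, and ``rectifying'' such a lifting is not a matter of bookkeeping. Concretely, to show the map is split you would need injectivity of $\LL(K)\otimes_S\kk \to F\otimes_S\kk = \Tor^S_*(\kk,M)$, but you have no handle on what this map is. The paper avoids this entirely: it uses the Basic Perturbation Lemma (Theorem~\ref{samemods}) to realize $F$ as having underlying module \emph{equal} to $\LL(H(\RR(M)))$, with differential the strongly linear part $\delta$ plus higher-order corrections $\delta_i$. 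Since $K \hookrightarrow H(\RR(M))$ is injective (Lemma~\ref{L(K)}), exactness of $\LL$ gives a quasi-split inclusion $\LL(K)\hookrightarrow \LL(H(\RR(M)))$ of graded modules for free; the only remaining check is that the perturbations $\delta_i$ vanish on $\LL(K)$, which is the short computation $h\del'\iota\gamma=0$. Your approach never surfaces $H(\RR(M))$, and without it there is no evident route to the split inclusion.

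For Part (2), ``extract BGG data $K' \subseteq M_\zero \otimes \om_E$ from a basis'' is too vague. Writing $L' \cong \LL(N)$, you need an actual $E$-module map $N \to K$, not just a $\kk$-linear identification of basis elements. The paper builds this as $N \to H(\RR\LL(N)) \to H(\RR(F)) \xra{\cong} H(\RR(M))$ using the unit of adjunction; injectivity comes from the natural identification $H(\RR(-)) \cong \Tor^S_*(\kk,-)$ together with the fact that $L' \hookrightarrow F$ is split, and the containment of the image in $K$ is read off from the explicit formula for the unit (Lemma~\ref{technical}). Both ingredients---the $\Tor$ interpretation and the explicit unit---are essential and do not appear in your outline.
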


We define the \defi{strongly linear strand} of the resolution $F$ to be the subcomplex $L$ as in Theorem \ref{introthm}.  We go on to define strongly linear strands in greater generality, without the assumption that $M$ is generated in degree $\zero$; see Definition \ref{moregeneral} below.
The formula in Theorem \ref{introthm}(2) can be used to efficiently compute strongly linear strands in \verb|Macaulay2|~\cite{M2}.

Theorem \ref{introthm}(2) is known in the standard graded case \cite[Corollary 7.11]{eisenbud}, but the arguments we use to prove Theorem \ref{introthm}(2) are completely different from those used to prove that result. The main difference is that the proof of \cite[Corollary 7.11]{eisenbud} uses the interpretation of linearity in terms of growth of Betti numbers, but this interpretation is unavailable outside of the standard graded case. Our approach is more delicate, involving a homological perturbation argument (Theorem \ref{samemods}) and an explicit calculation of the unit and counit of the multigraded BGG adjunction (Lemma \ref{technical}). 
Some aspects of these technical results have analogues in work of Eisenbud-Fl\o ystad-Schreyer (e.g. \cite[Theorem 3.7(b)]{EFS}), but our approach is quite different from theirs. The basic difficulty that arises is that the role of complexes over an exterior algebra in the classical BGG correspondence is played by differential modules over an exterior algebra in the multigraded case.  Thus, many of the techniques used in the proof of key technical results like \cite[Theorem 3.7(b)]{EFS} are not available in our setting; see Remark \ref{lemmaremark} for details.

\medskip

Finally, we turn to an application of Theorem~\ref{introthm}.  One of the most important results about linear strands in the standard graded case is Green's Linear Syzygy Theorem~\cite{green} (see \cite[Theorem 7.1]{eisenbud} for a modern statement), which gives a bound on the length of the linear strand in terms of simple invariants of $M$. Green's result answered positively a conjecture of Eisenbud-Koh~\cite{ek} and is the basis of perhaps the simplest modern proof of Green's seminal results on $N_p$ conditions for high degree curves (see \cite[Theorem 8.8(1)]{eisenbud}). We generalize Green's theorem to the multigraded setting:

\begin{thm}[Multigraded Linear Syzygy Theorem]
\label{intromlst}
Let $A$, $S$, $M$, and $F$ be as in Theorem~\ref{introthm}.  The length of the strongly linear strand of $F$ is at most $\max\{\dim M_\zero - 1, \dim R_\zero(M)\}$,
where $R_\zero(M)$ is the variety of rank one linear syzygies of $M$.
\end{thm}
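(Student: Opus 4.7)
The plan is to combine Theorem~\ref{introthm}(2) with a multigraded adaptation of Green's argument from~\cite{green}. Using Theorem~\ref{introthm}(2), the strongly linear strand becomes $L = \LL(K)$ where $K = \ker(M_\zero \otimes \om_E \to \RR(M))$. The length of $L$ equals the largest index $i$ at which $\LL(K)$ has a nonzero term, which, via the explicit description of $\LL$, is controlled by the graded support of $K$ inside $M_\zero \otimes \om_E$. Thus the problem reduces to bounding the graded extent of this $E$-submodule.

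The bound $\dim M_\zero - 1$ is the easier of the two and comes from a dimension count on $M_\zero \otimes \om_E$. Roughly, a ``depth-$i$'' element of $K$ under the exterior algebra action must involve at least $i+1$ linearly independent directions in the $M_\zero$ factor, forcing $i \leq \dim M_\zero - 1$. This part of the argument should port over from the standard-graded case with only cosmetic changes to track multidegrees, since it relies only on the structure of $M_\zero \otimes \om_E$ as a free $E$-module of rank $\dim M_\zero$.

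The more substantial bound $\dim R_\zero(M)$ follows the spirit of Green's rank-one argument. Given a depth-$\ell$ element of $K$, one constructs a family of $\ell$ linearly independent rank-one elements in $\RR(M)$ by repeatedly contracting with the exterior-algebra action, producing elements of $\bigoplus_j M_\zero \otimes S_{\deg(x_j)}$ whose tensor rank is one. This will show $\dim R_\zero(M) \geq \ell$, completing the proof. The main obstacle is precisely this final step: in the multigraded setting, the linear-syzygy space $\RR(M)$ decomposes into multigraded pieces indexed by the distinct variable degrees, and the variety $R_\zero(M)$ must be understood in this stratified way. Adapting Green's combinatorial rank-estimate so that the constructed rank-one syzygies remain linearly independent across different multidegrees --- and so that they contribute to $\dim R_\zero(M)$ in the correct geometric sense --- is the crux of the argument. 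One subtlety is that $\om_E$ is no longer concentrated in a single multidegree, so the exterior contractions used to generate rank-one syzygies must be indexed by an appropriate chain of multidegrees in the support of $K$, and the resulting family must be verified to have the claimed dimension.
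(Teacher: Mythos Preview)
Your proposal has a genuine structural gap. You treat $\dim M_\zero - 1$ and $\dim R_\zero(M)$ as two separate upper bounds, each valid unconditionally, and then take their maximum. But the theorem asserts only that the length is at most the \emph{larger} of the two: neither bound need hold on its own. Your ``easier'' argument for $\dim M_\zero - 1$ is simply false in general. Take $M = \kk$: then $K = M_\zero \otimes \om_E = \om_E$, the strongly linear strand is the full Koszul complex of length $n+1$, while $\dim M_\zero - 1 = 0$. No dimension count on $M_\zero \otimes \om_E$ can force the length below $\dim M_\zero$, because a single generator of $M_\zero$ tensored with the socle of $\om_E$ already lives in auxiliary degree $n+1$.

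The paper's proof (following Green and Eisenbud~\cite[\S7D]{eisenbud}) instead uses a case split. When $\dim R_\zero(M) \le \dim M_\zero - 1$, one dualizes: setting $Q$ to be the cokernel of the map dual to $M_\zero \otimes \om_E \to \RR(M)$, one has $L = \LL(Q^*)$, and one shows that $Q$ is annihilated by every monomial in $E$ of length $\dim M_\zero$. The hypothesis $\dim R_\zero(M) \le \dim M_\zero - 1$ is precisely what makes this annihilation argument go through. When instead $\dim R_\zero(M) \ge \dim M_\zero$, the argument is inductive: one restricts to a subring $S_I$ with one fewer variable (Lemma~\ref{indstep}), which drops both $\dim R_\zero$ and the length of the strand by at most one, eventually landing in the first case. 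Your proposal has neither the case split nor the inductive restriction step.

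A secondary issue: in your sketch for the $\dim R_\zero(M)$ bound, you propose producing $\ell$ linearly independent rank-one tensors and concluding $\dim R_\zero(M) \ge \ell$. But $\dim R_\zero(M)$ is the dimension of an affine variety, not of a linear span, so linear independence of points does not give the required lower bound. The paper sidesteps this entirely: it never bounds $\dim R_\zero(M)$ from below, but instead reduces the number of variables until the other case applies.
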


We refer the reader to Notation \ref{notation} below for a precise definition of $R_\zero(M)$. See Theorem~\ref{LST} for a more general statement involving modules that are not necessarily generated in a single degree. Both of the bounds $\dim M_\zero - 1$ and $\dim R_\zero(M)$ in the theorem may be attained: see Remark~\ref{rem:sharp}.  The proof of Theorem~\ref{intromlst}  combines Green's proof of the Linear Syzygy Theorem and our theory of multigraded linear strands, as articulated in Theorem~\ref{introthm}.  In other words, the main novelty in Theorem~\ref{intromlst} is in the statement, and the main subtleties are in the development of a theory of multigraded linear strands, as discussed above.

As in Green's work, the Multigraded Linear Syzygy Theorem has implications for the geometric study of syzygies, namely, for the syzygies of vector bundles on subvarieties of a toric variety. Before stating our main result along these lines, we note: for a homogeneous ideal $I$ in a multigraded polynomial ring $S$, we say that $I$ is \defi{nondegenerate} if $I$ belongs to the square of the maximal ideal of $S$.

\begin{cor}
\label{greencor variety}
Let $X$ be a projective toric variety with Cox ring $S$ and $P\subseteq S$ a nondegenerate, homogeneous prime ideal with $Y \subseteq X$ the corresponding integral subvariety.  Let $N$ be a finitely generated graded $S$-module such that the sheaf $\widetilde{N}$ is a vector bundle on $Y$, and let $M$ be a submodule of the $A = \Cl(X)$-graded $S$-module $\bigoplus_{d \in \Eff(S)} H^0 (Y, \widetilde{N(d)})$. 
\begin{enumerate}
\item $\Eff(M) = \{a \in A \text{ : } M_a \ne 0\}$ contains a minimal element with respect to the partial ordering described in Notation \ref{effective}.
\item Let $a \in \Eff(M)$ be a minimal element. The $a$-strongly linear strand (see Definition \ref{moregeneral}) of the minimal free resolution of $M$ has length at most $\dim_\kk(M_a) - 1$. 
\end{enumerate}
\end{cor}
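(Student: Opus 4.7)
The plan is to combine the generalized Multigraded Linear Syzygy Theorem (Theorem~\ref{LST}) with a geometric input supplied by the hypotheses that $\widetilde{N}$ is a vector bundle on the integral nondegenerate subvariety $Y$. Part (1) is an exercise in positivity: since $M \subseteq \bigoplus_{d \in \Eff(S)} H^0(Y, \widetilde{N(d)})$, we have $\Eff(M) \subseteq \Eff(S)$. The positive $A$-grading supplies a homomorphism $\theta \colon A \to \Z$ with $\theta(\deg x_i) > 0$ for all $i$, and hence $\theta$ is strictly positive on $\Eff(S) \setminus \{\zero\}$. Any strictly descending chain $a_1 > a_2 > \cdots$ in $\Eff(M)$ would yield a strictly descending sequence $\theta(a_1) > \theta(a_2) > \cdots$ of nonnegative integers, which is impossible. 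Thus $\Eff(M)$ satisfies the descending chain condition and admits a minimal element.

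For part (2), I would apply Theorem~\ref{LST}, the version of the Multigraded Linear Syzygy Theorem for modules not generated in a single degree, to bound the length of the $a$-strongly linear strand by $\max\{\dim_\kk M_a - 1, \dim R_a(M)\}$. The minimality of $a$ in $\Eff(M)$ guarantees that every nonzero element of $M_a$ is a minimal generator of $M$: any expression $m = \sum s_i m_i$ with homogeneous $s_i \in \m$ of positive degree would force some $m_i$ to have degree strictly below $a$ in the partial order, contradicting minimality. It therefore suffices to show $R_a(M) = \emptyset$.

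Suppose for contradiction that there exist $m \in M_a \setminus \{0\}$ and a nonzero $\kk$-linear combination $\ell$ of variables of some common degree $b$ with $\ell \cdot m = 0$ in $M_{a+b}$. Since $M \subseteq \bigoplus_d H^0(Y, \widetilde{N(d)})$, one may regard $m$ as a nonzero global section of the vector bundle $\widetilde{N(a)}$ on $Y$; then $\ell \cdot m$ is the zero section of the torsion-free sheaf $\widetilde{N(a+b)}$ on the integral variety $Y$. Since $m$ is generically nonzero, $\ell$ must vanish on a dense open subset of $Y$, hence on all of $Y$, so $\ell \in P$. Nondegeneracy of $P$ then gives $\ell \in \m^2$; but the variables $x_0, \dots, x_n$ form a $\kk$-basis of $\m/\m^2$, so a $\kk$-linear combination of variables belongs to $\m^2$ only if it is zero, contradicting $\ell \neq 0$.

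The step I expect to require the most care is matching the notion of rank-one linear syzygy in $R_a(M)$, as formalized in Notation~\ref{notation}, with the kind of algebraic relation $\ell \cdot m = 0$ that the geometric argument above rules out, and ensuring that Definition~\ref{moregeneral} and Theorem~\ref{LST} extend from $\zero$ to a general minimal degree $a$ in the way the bound requires. Once this technical dictionary is set up, the torsion-free argument together with the nondegeneracy of $P$ combines in a clean way to finish the proof, and the substantive geometric content really is the observation that a nonzero $\kk$-linear form in $P$ would contradict $P \subseteq \m^2$.
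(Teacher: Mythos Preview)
Your proposal is essentially correct and follows the same approach as the paper: apply Theorem~\ref{LST} and then show that every rank-one linear syzygy $w\otimes m$ with $wm=0$ is trivial, using that $M$ embeds in sections of a vector bundle on the integral scheme $Y$ to force $w\in P$, and then invoking nondegeneracy of $P$. The paper phrases the torsion-free step via localization (choose a maximal ideal $Q$ where $m_Q\neq 0$ and use that $M_Q$ embeds in a free $(S/P)_Q$-module), while you phrase it sheaf-theoretically; these are equivalent.

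Two small technical points to tighten. First, $R_a(M)$ always contains the zero tensor, so the goal is $R_a(M)=\{0\}$ (hence $\dim R_a(M)=0\le \dim_\kk M_a-1$), not $R_a(M)=\emptyset$. Second, an element $w\in W$ need not be homogeneous when the variables have different degrees, so before running your argument you should reduce to the homogeneous case; the paper does this by observing that $m\otimes w\in R_a(M)$ iff $m\otimes w_i\in R_a(M)$ for each homogeneous component $w_i$ of $w$. Your paragraph on minimal generators is not needed for the argument.
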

A nearly identical result holds when $X$ is a projective toric stack: see Corollary~\ref{greencor}.
See also \cite[Theorem 4.5]{Np} for a similar geometric consequence of the Multigraded Linear Syzygy Theorem that plays a crucial role in the proofs of the main results in \cite{Np}.

\subsection*{Funding}
The first author was supported by NSF-RTG grant 1502553.  The second author was supported by NSF grants 
DMS-1601619 and DMS-1902123.

\subsection*{Acknowledgements}
We thank David Eisenbud, Hal Schenck, and Frank-Olaf Schreyer for valuable conversations. We are also grateful to the referee for helpful comments that improved this paper. 

\subsection*{Notation}

Throughout the paper, $\kk$ denotes a field, and $A$ is an abelian group. We will assume $S = \kk[x_0, \dots, x_n]$ is positively $A$-graded. The $A$-grading on $S$ need not be positive for the results in Sections \ref{subsec:bgg} and \ref{unitcounit}, but this is necessary for Sections \ref{EFS} - \ref{LSTsection}. 

Let $E = \Lambda_{\kk}(e_0, \dots, e_n)$ be an exterior algebra, equipped with the $A \oplus \Z$-grading given by $\deg(e_i) = (-\deg(x_i); -1)$. We call the $\Z$-grading on $E$ the \defi{auxiliary grading}. In this paper, a \defi{differential $E$-module} is an $A \oplus \Z$-graded $E$-module equipped with a square 0 endomorphism of degree $(0 ; -1)$. All $E$-modules are right modules, but any right $E$-module $M$ can be considered as a left $E$-module with action $em = (-1)^{\on{aux}(e)\on{aux}(m)}me$, where $\on{aux}( - )$ denotes the degree in the auxiliary grading. 
\section{Background on the multigraded BGG correspondence}\label{subsec:bgg}

Let $\DM(E)$ denote the category of differential $E$-modules, and let $\Com(S)$ denote the category of complexes of $A$-graded $S$-modules. By a result of Hawwa-Hoffman-Wang in \cite{hhw}, the classical BGG correspondence \cite{bgg} generalizes to an adjunction 
\begin{equation}
\label{adjunction}
\LL : \DM(E) \leftrightarrows \Com(S) : \RR.
\end{equation}
See \cite[Section 2.2]{tate} for a detailed discussion of this multigraded BGG correspondence. We recall here the formulas for $\LL$ and $\RR$. If $D \in \DM(E)$, the complex $\LL(D)$ has terms and differential given by:
$$
\LL(D)_j = \bigoplus_{a \in A}S(-a)  \otimes_\kk D_{(a; j)} \quad \text{ and } \quad s \otimes d \mapsto  (\sum_{i = 0}^n x_is \otimes e_id) - s \otimes \del_D(d).
$$
Let $\om_E$ denote the $E$-module $\underline{\Hom}_{\kk}(E, \kk)\cong E(-\sum_{i = 0}^n \deg(x_i); -n-1)$. Given $C \in \Com(S)$, the differential $E$-module $\RR(C) \in \DM(E)$ has underlying module 
$$
\bigoplus_{j \in \Z} \bigoplus_{a \in A} (C_j)_a \otimes_\kk \om_E(-a; -j),
$$ 
and the differential acts on the $j^{\th}$ summand $\bigoplus_{a \in A} (C_j)_a \otimes_\kk \om_E(-a; -j)$ by 
$$
c \otimes f \mapsto (-1)^j (\sum_{i = 0}^n x_ic\otimes e_if) + \del_C(d) \otimes f .
$$
We will need the following key properties of the multigraded BGG functors:

\begin{prop}[\cite{hhw} Propositions 3 and 4]
\label{HHWprop}
The functors $\LL$ and $\RR$ are exact, and, given $C \in \Com(S)$ and $D \in \DM(E)$, the maps $D \to \RR\LL(D)$ and $\LL \RR(C) \to C$ arising from the unit and counit of the adjunction \eqref{adjunction} are both quasi-isomorphisms.
\end{prop}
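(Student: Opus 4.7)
The plan is to treat exactness and the quasi-isomorphism statements separately, with the latter reducing to a Koszul-style computation.

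First I would establish exactness. Both $\LL$ and $\RR$ are constructed degree-by-degree by applying $-\otimes_\kk V$ for various $\kk$-vector spaces $V$ and then taking direct sums; since tensor over a field and direct sums are exact, a short exact sequence $0 \to D' \to D \to D'' \to 0$ in $\DM(E)$ produces a degree-wise short exact sequence that is preserved term by term when building $\LL(D)$. The explicit formula for the differential of $\LL(D)$ is natural in $D$, so the maps commute with differentials, and exactness follows; the same argument works for $\RR$. A direct consequence is that both functors preserve quasi-isomorphisms (via the long exact sequence of a mapping cone), which will be crucial for the extension steps below.

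Next I would carry out the core Koszul computation that $\LL\RR(S) \to S$ is a quasi-isomorphism. Unwinding the definitions, $\RR(S) = \bigoplus_{a \in A} S_a \otimes_\kk \om_E(-a; 0)$ with differential given by multiplication by $\sum_i x_i \otimes e_i$, and the underlying graded module of $\LL\RR(S)$ becomes $\bigoplus_{a, b} S(-b) \otimes_\kk S_a \otimes_\kk (\om_E)_{(b-a;\,*)}$. Using $\om_E \cong E(-\sum_i \deg(x_i); -n-1)$ together with the Koszul duality between $S$ and $E$, I would identify this total complex with a multigraded Koszul-type resolution whose augmentation is exactly the counit. Twisting $S \mapsto S(-a)$ extends the statement to all finitely generated free $A$-graded $S$-modules concentrated in a single homological degree. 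Dually, for the unit I would verify the statement on cofree modules $\om_E(-a;-j)$: since $\LL(\om_E)$ is a Koszul resolution of $\kk$, one has $\RR\LL(\om_E)\simeq \RR(\kk) = \om_E$ with the unit recovering the identity up to canonical isomorphism.

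Finally, I would extend from these generators to all of $\Com(S)$ and $\DM(E)$. Because $\LL$ and $\RR$ are exact, the class of complexes $C \in \Com(S)$ for which the counit is a quasi-isomorphism is closed under quasi-isomorphism, shift, direct sum, and extension (by the five-lemma applied to the long exact sequences of mapping cones). Starting from the free modules $S(-b)$, any complex can be built up from these operations, so the counit is a quasi-isomorphism throughout $\Com(S)$; analogous reasoning, starting from the cofree generators of $\DM(E)$, handles the unit.

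The main obstacle I anticipate is the detailed bookkeeping in the core Koszul computation: one must track both the $A$-grading and the auxiliary $\Z$-grading, verify that the sign conventions in the definitions of $\del_{\LL(D)}$ and $\del_{\RR(C)}$ (including the sign $(-1)^j$ in the formula for $\RR$ and the Koszul sign rule $em = (-1)^{\aux(e)\aux(m)}me$) really do combine to the standard Koszul differential under the proposed identification, and confirm that the infinite direct sums behave correctly under the limit arguments used in the extension step. Because objects of $\DM(E)$ are \emph{differential modules} rather than complexes, the classical BGG arguments that filter by homological degree are unavailable, and the verification must take place at the level of the single square-zero differential; this is ultimately what forces the Koszul duality input into the multigraded setting.
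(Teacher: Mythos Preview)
The paper does not give its own proof of this proposition; it is quoted directly from \cite{hhw} (their Propositions~3 and~4), so there is no argument in the present paper to compare yours against.

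Your proposed strategy---exactness read off from the termwise formulas, a Koszul computation on the generators $S(-b)$ and $\om_E(-a;-j)$, and extension via exactness/five-lemma arguments---is the standard route to BGG-type statements and is sound in outline. The place that genuinely needs work is the final extension step. The class of $C$ for which the counit is a quasi-isomorphism is indeed closed under shift, cone, and quasi-isomorphism, but the assertion that ``any complex can be built up from these operations'' starting from the $S(-b)$ is not automatic for arbitrary (possibly unbounded) objects: one needs K-projective resolutions or a homotopy-(co)limit argument, and the compatibility of $\LL\RR$ with such limits has to be checked. On the $\DM(E)$ side the analogous step requires cofree (injective) resolutions of differential $E$-modules, and, as you note yourself, the usual induction on homological degree is unavailable because these are differential modules rather than complexes. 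You correctly flag this as the main obstacle; if you want a self-contained proof in the generality stated, that is where the real content lies. For the purposes of this paper the citation to \cite{hhw} suffices.
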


\section{Computing the unit and counit of the multigraded BGG adjunction}
\label{unitcounit}
We will need an explicit calculation of the unit and counit of the adjunction \eqref{adjunction}; this is the content of Lemma \ref{technical} below. Lemma \ref{technical} plays a key role in the proof of Theorem \ref{introthm}.

Let $C \in \Com(S)$ and $D \in \DM(E)$, and let $\eta\colon D \xra{\simeq} \RR\LL(D)$ and $\epsilon\colon \LL \RR(C) \xra{\simeq} C$ denote the quasi-isomorphisms arising from the unit and counit of the adjunction \eqref{adjunction}. We record the following calculations of the underlying modules of $\RR\LL(D)$ and $\LL \RR(C)$:
\begin{align*}
\RR\LL(D) &= \bigoplus_{(a,j) \in A \oplus \Z}  \bigoplus_{d \in A}  S_{d - a} \otimes_\kk D_{(a;j)} \otimes_\kk \om_E(-d; -j), \\
 \LL \RR(C)_i &=  \bigoplus_{(a,j) \in A \oplus \Z}  \bigoplus_{d \in A}S(-d)\otimes_\kk (C_j)_a \otimes_\kk (\om_E)_{(d-a; i-j)}.
\end{align*}

\begin{lemma}
\label{technical}
The unit of adjunction $\eta : D \to \RR\LL(D)$ is given by the composition
$$
D \to \bigoplus_{(a; j) \in A \oplus \Z} \underline{\Hom}_{\kk}(E(a;j), D_{(a;j)})  \to \RR \LL(D),
$$
where the first map sends $d \in D$ to the collection $g_{a, j} \in \underline{\Hom}_{\kk}(E(a;j), D_{(a;j)}) $ given by
$$
g_{a,j}(e) = \begin{cases} (-1)^{j} de, & \deg(de) = (a;j), \\ 0, & \on{else;} \end{cases},
$$
and the second map identifies $\underline{\Hom}_{\kk}(E(a;j), D_{(a;j)})$ with $D_{(a;j)} \otimes_\kk \om_E(-a ; -j)$ and then embeds  into the summands with $d = a$. The counit $\epsilon\colon \LL \RR(C) \to C$ is given, in homological degree $i$, by
$$
\LL \RR(C)_i \to \bigoplus_{a \in A}  S(-a)  \otimes_\kk(C_i)_a \to C_i,
$$
where the first map projects onto the summands with $a = d$ and $i = j$, and the second map is the $S$-module action multiplied by $(-1)^i$. In particular, $\eta$ is injective, and $\epsilon$ is surjective.
\end{lemma}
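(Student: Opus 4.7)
The strategy is to verify the claimed formulas by exploiting the universal property defining $\eta$ and $\epsilon$: $\eta_D$ is the image of $\id_{\LL(D)}$ under the adjunction bijection $\Hom_{\Com(S)}(\LL(D), \LL(D)) \xra{\sim} \Hom_{\DM(E)}(D, \RR\LL(D))$, and dually $\epsilon_C$ is the image of $\id_{\RR(C)}$ under $\Hom_{\DM(E)}(\RR(C), \RR(C)) \xra{\sim} \Hom_{\Com(S)}(\LL\RR(C), C)$. Thus the main task is to make the adjunction bijection explicit on morphisms, and then to trace through the identity in each direction.

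The key observation is that any chain map $\phi\colon \LL(D) \to C$ in $\Com(S)$ is, by $S$-linearity, determined by its restriction to the $\kk$-summands $D_{(a;j)} \cong S_0 \otimes_\kk D_{(a;j)} \subset \LL(D)_j$, which yields a family of $\kk$-linear maps $\phi_{a,j}\colon D_{(a;j)} \to (C_j)_a$. The chain map condition $\phi \circ \del_{\LL(D)} = \del_C \circ \phi$, combined with the formula $\del_{\LL(D)}(1 \otimes d) = \sum_i x_i \otimes e_i d - 1 \otimes \del_D(d)$, encodes precisely the compatibility with both the $E$-action on $D$ (via the $\sum x_i \otimes e_i$ term) and the internal differential of $D$ (via $\del_D$). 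The resulting adjoint map $\psi\colon D \to \RR(C)$ in $\DM(E)$ can then be written out: it sends $d \in D_{(a;j)}$ to the element built from $\phi_{a,j}(d) \in (C_j)_a$ placed in the summand $(C_j)_a \otimes_\kk \om_E(-a;-j) \subset \RR(C)$, paired with the canonical generator of the degree $(0;0)$-part of $\om_E(-a;-j)$.

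I then specialize to $\phi = \id_{\LL(D)}$: the maps $\phi_{a,j}$ become the tautological inclusions $D_{(a;j)} \hookrightarrow (\LL(D)_j)_a$, and the recipe above produces exactly the formula for $\eta$ stated in the lemma, with the sign $(-1)^j$ emerging from the $(-1)^j$ twist in $\del_{\RR(-)}$ as one propagates the chain map condition through the adjunction. Symmetrically, setting $\psi = \id_{\RR(C)}$ in the dual direction forces $\epsilon\colon \LL\RR(C) \to C$ to be the map that projects onto the summands with $d = a$ and $j = i$ and then applies the $S$-action on $C$ twisted by $(-1)^i$, matching the stated formula. Once the formulas are in hand, injectivity of $\eta$ and surjectivity of $\epsilon$ are immediate: for $d \in D_{(a;j)}$ the element $\eta(d)$ has a nonzero component in the $(a;j)$-summand proportional to $d$ itself, and every $c \in (C_i)_a$ lies in the image of $\epsilon$ via the obvious element of the summand indexed by $(d,a,j) = (a,a,i)$.

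I expect the main obstacle to be sign bookkeeping. The signs $(-1)^j$ and $(-1)^i$ must be reconciled with three distinct sources: the sign $(-1)^j$ in the differential of $\RR(-)$; the identification $\underline{\Hom}_\kk(E(a;j), \kk) \cong \om_E(-a;-j)$, which involves a choice of dual basis and hence a Koszul sign; and the chain map/differential $E$-module conditions themselves, which must be verified against the explicit formulas for $\del_{\LL(D)}$ and $\del_{\RR(C)}$ to confirm that the candidate $\eta$ is a morphism in $\DM(E)$ and the candidate $\epsilon$ is a chain map. The heart of the proof thus reduces to a careful element-by-element calculation on homogeneous generators, but no conceptually new ingredient is needed beyond the explicit form of the adjunction isomorphism derived above.
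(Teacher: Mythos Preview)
Your proposal is correct and follows essentially the same approach as the paper: make the adjunction isomorphism $\Hom_{\Com(S)}(\LL(X),Y)\cong\Hom_{\DM(E)}(X,\RR(Y))$ explicit as a composition of $\Hom$--$\otimes$ adjunctions (with a sign twist $g\mapsto(-1)^j g$ inserted), and then trace the identity through in each direction to obtain the stated formulas for $\eta$ and $\epsilon$. The paper's write-up is terser than yours but the argument is the same, and your identification of the sign bookkeeping as the only real work is accurate.
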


\begin{proof}
Let $X \in \DM(E)$ and $Y \in \Com(S)$. Given $a \in A$ and $j \in \Z$, let $\sigma$ denote the automorphism of $\Hom_{\kk}(X_{(a; j)},  (Y_j)_{a} )$ given by sending a map $g$ to $(-1)^jg$. The adjunction isomorphism
$$
\Hom_{\DM(E)}(\LL(X), Y) \cong \Hom_{\Com(S)}(X, \RR(Y))
$$
is given by the composition
\begin{align*}
\prod_j \Hom_S( \LL(X)_j, Y_j) &= \prod_j \Hom_S(\bigoplus_{a \in A} X_{(a; j)} \otimes_\kk S(-a)  , Y_j) \\
&\xra{\cong}  \prod_{a \in A, j \in \Z} \Hom_{\kk} ( X_{(a; j)} , (Y_j)_a) \\
&\xra{\sigma} \prod_{a \in A, j \in \Z} \Hom_{\kk} ( X_{(a; j)} , (Y_j)_a) \\
& \xra{\cong} \prod_{a \in A, j \in \Z}\Hom_E(X,  \underline{\Hom}_{\kk}(E(a; j), (Y_j)_{a}  ))\\
&= \Hom_E(X, \RR(Y)),
\\
\end{align*}
where the two unlabeled isomorphisms are given by $\Hom$-$\otimes$ adjunction. Now take $X = D$ and $Y = \LL(D)$, and apply this isomorphism to $\id_{\LL(D)}$ to compute the map $\eta$. The map $\epsilon$ is computed similarly.
\end{proof}

\section{Existence and uniqueness of maximal strongly linear subcomplexes}
\label{EFS}

Recall from Definition \ref{stronglylinear} that a complex $C$ of free $A$-graded $S$-modules is \defi{strongly linear} if there exists a choice of basis of $C$ with respect to which its differentials are matrices
whose entries are $\kk$-linear combinations of the variables.

\begin{remark}
We caution that our definition of strong linearity involves a choice of coordinates $x_0, \dots, x_n$ in $S$, and it is generally not invariant under all graded ring automorphisms of $S$. For instance, if $S=\kk[x,y]$ with degrees $1$ and $2$, then the automorphism of $S$ that sends $x$ to itself and $y$ to $y - x^2$ will not preserve strongly linear complexes. However, in our main cases of interest, $S$ is the Cox ring of a toric variety or stack $X$ associated to a fan, in which case the variables of $S$ are determined by the rays of the fan. 

We also note that, if $S$ is the Cox ring of a toric variety $X$, then the notion of strong linearity depends only on the Cox ring $S$ and not its irrelevant ideal. Thus, since there exist distinct toric varieties with the same Cox ring, strong linearity is insensitive to certain aspects of the geometry of $X$.
\end{remark}

We establish some notation. Let $M$ be a graded $S$-module and $a \in A$. The $E$-module $M_a \otimes_\kk \om_E(-a ; 0)$ is a summand of $\RR(M)$; we let 
\begin{equation}
\label{Kmod}
K_a(M) = \ker(M_a \otimes_\kk \om_E(-a ; 0) \xra{\del_{\RR(M)}} \RR(M)).
\end{equation}

The goal of this section is to prove the following result, which implies Theorem \ref{introthm}:

\begin{thm}
\label{maxlinear}
Let $M$ be an $A$-graded $S$-module that is generated in a single degree $d$, and let $F$ be the minimal free resolution of $M$. \begin{enumerate}
\item $\LL(K_d(M))$ is a strongly linear, quasi-split subcomplex of $F$ (see the introduction for the definition of a quasi-split subcomplex).
\item Any strongly linear, quasi-split subcomplex of $F$ is isomorphic to a quasi-split subcomplex of $\LL(K_d(M))$.
\end{enumerate}
Thus, $\LL(K_d(M))$ is the unique maximal strongly linear, quasi-split subcomplex of $F$. In particular, if $F$ is strongly linear, then $F \cong \LL(K_d(M))$.
\end{thm}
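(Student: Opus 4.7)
The plan is to establish (1) by constructing an embedding $\varphi : \LL(K_d(M)) \hookrightarrow F$ realizing $\LL(K_d(M))$ as a strongly linear, quasi-split subcomplex of $F$, and then to prove (2) by reversing this construction.

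First, I would verify strong linearity of $\LL(K_d(M))$. As a subspace of the summand $M_d \otimes_\kk \om_E(-d;0) \subseteq \RR(M)$ consisting of elements killed by $\partial_{\RR(M)}$, $K_d(M)$ is an $E$-submodule (since $\partial_{\RR(M)}$ commutes with the $E$-action up to sign) and inherits the zero differential. The formula for $\LL$ then reduces its differential on $\LL(K_d(M))$ to $s \otimes k \mapsto \sum_{i=0}^n x_i s \otimes e_i k$, so any choice of $\kk$-basis of $K_d(M)$ induces $S$-bases of each $\LL(K_d(M))_j$ with respect to which the differential matrix consists of $\kk$-linear combinations of the $x_i$, as required by Definition \ref{stronglylinear}.

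To construct $\varphi$, the inclusion $\iota : K_d(M) \hookrightarrow \RR(M)$ is a chain map of differential $E$-modules, so $\LL(\iota): \LL(K_d(M)) \to \LL\RR(M)$ is a chain map; composing with a lift $\tilde\epsilon : \LL\RR(M) \to F$ of the counit $\epsilon$ (Proposition \ref{HHWprop}) along the augmentation $F \to M$ gives $\varphi$. A direct degree computation, using that $\om_E$ is one-dimensional at auxiliary degree zero and that each $e_i$ acts by zero there, shows $K_d(M)_{(d;0)} = M_d$ and $K_d(M)_{(a;0)} = 0$ for $a \neq d$, so $\LL(K_d(M))_0 = S(-d) \otimes_\kk M_d = F_0$. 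The explicit counit formula in Lemma \ref{technical} then allows one to choose $\tilde\epsilon$ so that $\varphi_0$ is the identity; in higher degrees, the $\kk$-basis of $K_d(M)$ exhibits the image of $\varphi$ as an $S$-free summand of $F$ with an $S$-complementary summand, giving both injectivity and quasi-splitness.

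For part (2), let $L \subseteq F$ be a strongly linear, quasi-split subcomplex. I would reverse the correspondence: applying $\RR$ to $L \hookrightarrow F$ and using the unit formula of Lemma \ref{technical} together with the strong linearity of $L$, one should show that $\RR(L)$ lies inside the summand $L_0 \otimes_\kk \om_E(-d;0)$ of $\RR(F)$ and is annihilated by the differential there, so that under $\RR(F) \simeq \RR(M)$ it identifies with a sub-differential $E$-module of $K_d(M)$. Applying $\LL$ and using the counit then recovers the embedding $L \hookrightarrow \LL(K_d(M))$ as a quasi-split subcomplex, proving maximality and the claim that if $F$ itself is strongly linear then $F \cong \LL(K_d(M))$.

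The main obstacle will be this last step. Outside the standard graded case, strong linearity is not detectable from Betti numbers (as the example in \eqref{eqn:lin vs strong lin} already illustrates), so the degree-based arguments available in the classical proof do not apply. Instead, one must use the minimality of $F$ --- which ensures that minimal generators of $F$ correspond to cycles in $\RR(F)$ --- together with the explicit unit and counit formulas of Lemma \ref{technical}, in order to extract the ``linear'' content of $L$ from its differentials and match it, summand by summand, with $K_d(M)$.
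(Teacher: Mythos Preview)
Your approach to part (1) has a real gap at the step where you claim injectivity and quasi-splitness of $\varphi$ in positive homological degrees. You construct $\varphi = \tilde\epsilon \circ \LL(\iota)$ via a lift $\tilde\epsilon : \LL\RR(M) \to F$ of the counit, and you correctly identify $\varphi_0$ with the identity on $F_0$. But a lift $\tilde\epsilon$ is only determined up to homotopy, and there is no a priori reason the composite $\varphi_j : \LL(K_d(M))_j \to F_j$ should be injective, let alone split, for $j>0$. The assertion that ``the $\kk$-basis of $K_d(M)$ exhibits the image of $\varphi$ as an $S$-free summand of $F$'' is precisely the nontrivial content of the theorem: you are implicitly assuming that the generators of $\LL(K_d(M))_j$ map to part of a minimal generating set of $F_j$, and nothing in your construction forces this.

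The paper handles this by a different mechanism. It uses the Basic Perturbation Lemma (packaged as Theorem \ref{samemods}) to produce an explicit identification of the underlying graded $S$-module of $F$ with that of $\LL(H(\RR(M)))$, under which the differential on $F$ becomes $\delta + \sum_{i\ge 2}\delta_i$, where $\delta$ is the strongly linear differential on $\LL(H(\RR(M)))$ and each $\delta_i$ is an $i$-fold composite of strongly linear maps built from the contraction data. Since $K_d(M) \hookrightarrow H(\RR(M))$ is an honest inclusion of $E$-modules (Lemma \ref{L(K)}), $\LL(K_d(M))$ sits inside $\LL(H(\RR(M)))$ as an $S$-module summand for free; the only remaining work is to check that the higher perturbation terms $\delta_i$ vanish on this summand, which reduces to the single observation $h\,\del'\,\iota\,\gamma = 0$ coming from the minimality of $d$ in $\Eff(M)$. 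Your lifted-counit map $\varphi$ could in principle be made to coincide with this inclusion, but proving that requires exactly the perturbation-theoretic control that Theorem \ref{samemods} supplies.

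For part (2) your outline is closer in spirit to the paper's, but the statement ``$\RR(L)$ lies inside the summand $L_0 \otimes_\kk \om_E(-d;0)$ of $\RR(F)$'' is not correct as written: $\RR(L)$ is spread over all auxiliary degrees. The paper instead writes $L \cong \LL(N)$ via Proposition \ref{linearprop} and studies the composite $\psi : N \to H(\RR\LL(N)) \to H(\RR(F)) \to H(\RR(M))$. Injectivity of $\psi$ is obtained from the natural identification $H(\RR(F)) \cong \Tor^S_*(\kk, F)$ together with quasi-splitness of $L$ in $F$, and the unit formula of Lemma \ref{technical} is used to show the image lands in $K_d(M)$. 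Your idea of ``reversing the construction'' is essentially this, but you should track $N$ rather than $\RR(L)$.
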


The proof of Part (1) uses Theorem \ref{samemods} below, which says, roughly, that the minimal free resolution of $M$ is a ``deformation" of $\LL(H(\RR(M)))$. Part (2) relies on the explicit calculation of the unit and counit of the multigraded BGG adjunction from Lemma \ref{technical}.

\begin{example}\label{ex:maximal}
Say $S=\kk[x_0,x_1,x_2]$, where $\deg(x_0) = 1 = \deg(x_1)$ and $\deg(x_2) = 2$. The minimal free resolution $F$ of $M=S/(x_0,x_1^2,x_2)$ is
\begin{footnotesize}
$$
S\xleftarrow{\left[\begin{smallmatrix} y_0&y_2&y_1^2\end{smallmatrix}\right]} S(-1)\oplus S(-2)^2  
\xleftarrow{\left[\begin{smallmatrix} y_2&0&y_1^2&\\-y_0&-y_1^2&0&\\0&y_2&-y_0\end{smallmatrix}\right]}  
S(-3)^2\oplus S(-4)  \xleftarrow{\left[\begin{smallmatrix} -y_1^2 \\y_0\\y_2\end{smallmatrix}\right]} S(-5).
$$
\end{footnotesize}
The maximal strongly linear quasi-split subcomplex of $F$ is
\[
\LL(K_{\zero} (M)) =  [S\xleftarrow{\left[\begin{smallmatrix} y_0&y_2\end{smallmatrix}\right]} S(-1)\oplus S(-2)\xleftarrow{\left[\begin{smallmatrix} y_2\\ -y_0\end{smallmatrix}\right]} S(-3)\gets 0].
 \]
\end{example}

\subsection{Proof of Theorem \ref{maxlinear}(1)}

We begin by recalling the notion of a contraction of one chain complex onto another, which was introduced by Eilenberg-Maclane \cite[\S 12]{EM}.

\begin{defn}
Given two chain complexes $C$ and $C'$ in an abelian category, a \defi{contraction of $C$ onto $C'$} is a triple $(\pi, \iota, h)$, where $\pi : C \to C'$ and $\iota : C' \to C$ are morphisms of complexes such that $\pi\iota = \id_{C'}$, $h : C \to C$ is a null homotopy of $\id_C - \iota\pi$, and the relations
$$
\pi h = 0, \quad h\iota = 0, \quad h^2 = 0
$$
are satisfied.
\end{defn}

\begin{construction}
\label{contractionex}
The contractions we consider in this paper arise in the following way. Let $D \in \DM(E)$, and let $\overline{\LL(D)}$ denote the complex with underlying $S$-module identical to $\LL(D)$ and $j^{\th}$ differential induced by $-\del_D$. Similarly, let $\overline{\LL(H(D))}$ denote the complex with underlying module identical to $\LL(H(D))$ and trivial differential. We now construct a contraction of $\overline{\LL(D)}$ onto $\overline{\LL(H(D))}$. 

The complex $\overline{\LL(D)}$ splits $S$-linearly; that is, letting $Z_j$ and $B_j$ denote the $j$-cycles and $j$-boundaries in $\overline{\LL(D)}$, we may choose an $S$-module decomposition
$$
\overline{\LL(D)}_j = B_j \oplus H_j \oplus L_j
$$
for all $j$ such that $B_j \oplus H_j = Z_j$. For each $j$, let $g_j : L_j \to B_{j -1}$ denote the isomorphism such that the $j^{\th}$ differential on $\overline{\LL(D)}$ can be expressed as
$$
\begin{pmatrix}
0 & 0 & g_j \\
0 & 0 & 0 \\
0 & 0 & 0
\end{pmatrix}.
$$
Using the isomorphism $H_j \cong H_j(\overline{\LL(D)}) \cong \LL(H(D))_j$, we define morphisms
$$
\iota : \overline{\LL(H(D))} \into \overline{\LL(D)} \quad \text{and} \quad \pi : \overline{\LL(D)} \onto \overline{\LL(H(D))}
$$
of complexes such that $\pi \iota = \id$. There is a null homotopy $h$ of $\id  - \iota  \pi$ given, in degree $j$, by 
$$
\begin{pmatrix}
0 & 0 & 0 \\
0 & 0 & 0\\
g_{j+1}^{-1} & 0 & 0
\end{pmatrix}.
$$
Since $\pi  h = 0$, $h \iota = 0$, and $h^2 = 0$, the triple $(\pi, \iota, h)$ is a contraction of $\overline{\LL(D)}$ onto $\overline{\LL(H(D))}$.  
\end{construction}

\begin{example}
Suppose $S = \kk[x]$, where $\deg(x) = 1$. Let us apply Construction \ref{contractionex} in the case where $D = \RR(S)$. We have $\RR(S) = \bigoplus_{d \ge 0} \om_E(-d ; 0)$, with differential given by multiplication by the exterior variable $e$. Let $F = \bigoplus_{d \ge 0} S(-d)$. The complex $\LL\RR(S)$ has the form
$$
0 \from F \from F(-1) \from 0,
$$ 
with differential given by the matrix
$$
\begin{pmatrix}
x & 0 & 0 & \cdots \\
-1 & x & 0 & \cdots \\
0 & -1 & x & \cdots \\
0 & 0 & -1 & \cdots \\
\vdots & \vdots & \vdots & \ddots \\
\end{pmatrix}.
$$
The complex $\overline{\LL\RR(S)}$ has the same underlying module and differential 
$$
\begin{pmatrix}
0 & 0 & 0 & \cdots \\
-1 & 0 & 0 & \cdots \\
0 & -1 & 0 & \cdots \\
0 & 0 & -1 & \cdots \\
\vdots & \vdots & \vdots & \ddots \\
\end{pmatrix}.
$$
Notice that the homology of $\overline{\LL\RR(S)}$ is $S$, concentrated in degree 0. To build a contraction of $\overline{\LL\RR(S)}$ onto $\overline{\LL(H(\RR(S)))}$, we take $B_0 = F(-1) = L_1$, $H_0 = S$, and $L_0 = B_1 = H_1 = 0$; and then follow the recipe in Construction \ref{contractionex}.

\end{example}

We now recall the notion of a minimal free resolution of a complex of $A$-graded $S$-modules. We begin with the following

\begin{defn}
\label{boundedbelow}
Let $M$ be an $A$-graded $S$-module. We say $M$ is \defi{bounded above (resp. below) in $A$-degrees} if there exists a homomorphism $\theta\colon A \to \Z$ such that
\begin{enumerate}
\item $\theta$ induces a positive $A$-grading on $S$, and 
\item We have $\bigoplus_{\theta(a) = i} M_a = 0$ for all $i \gg 0$ (resp. $i \ll 0$). That is, $M$ is bounded above (resp. below) with respect to the $\Z$-grading induced by $\theta$. 
\end{enumerate}
Similarly, given an $A \oplus \Z$-graded $E$-module $N$, we say $N$ is \defi{bounded above/below in $A$-degrees} if there exists a map $\theta$ satisfying (1) such that the $\Z$-grading given by $N_i = \bigoplus_{j \in \Z} \bigoplus_{\theta(a) = i} N_{(a;j)}$ is bounded above/below.
\end{defn}

\begin{remark}
The existence of a map $\theta$ satisfying (1) in Definition \ref{boundedbelow} is assumed, since $S$ is positively $A$-graded. Thus, any finitely generated $S$-module is bounded below in $A$-degrees.
\end{remark}

If $F$ and $F'$ are minimal complexes of free $A$-modules that are homologically bounded below and whose terms are bounded below in $A$-degrees, then any quasi-isomorphism from $F$ to $F'$ is a chain isomorphism. This follows essentially from the proof of \cite[Proposition 4.4.1]{roberts}, noting that we may apply the graded version of Nakayama's Lemma to the terms of $F$ and $F'$.

\begin{defn}
\label{exuni}
Given a complex $C$ of $A$-graded $S$-modules, a \defi{minimal free resolution of $C$} is a quasi-isomorphism $F \xra{\simeq} C$, where $F$ is a minimal complex of free $A$-graded $S$-modules that is homologically bounded below and whose terms are bounded below in $A$-degrees.
\end{defn}

\begin{remark}
It follows from the above discussion that, when a minimal free resolution of a complex $C$ of $A$-graded $S$-modules exists, it is unique up to isomorphism. 
\end{remark}

The main ingredient in our proof of Theorem \ref{maxlinear}(1) is the following result:

\begin{thm}
\label{samemods}
Let $G$ be a homologically bounded below complex of $A$-graded $S$-modules whose terms are bounded below in $A$-degrees. Using the notation of Construction \ref{contractionex}, choose a contraction $(\pi, \iota, h)$ of $\overline{\LL\RR(G)}$ onto $\overline{\LL(H(\RR(G)))}$. Let $\del'$ denote the $S$-linear endomorphism of $\overline{\LL\RR(G)}$ given by multiplication on the left by $\sum_{i = 0}^n x_i \otimes e_i$, and let $F$ be the complex with underlying module the same as $\LL(H(\RR(G)))$ and differential given by 
\begin{equation}
\label{perturbdiff}
\delta + \sum_{i \ge 2} \delta_i, \text{ where $\delta$ is the differential on $\LL(H(\RR(G)))$, and } \delta_i = (-1)^{i - 1} \pi (\del' h)^{i - 1}\del'  \iota.
\end{equation}
The complex $F$ is the minimal free resolution of $G$, in the sense of Definition \ref{exuni}. Moreover, each $\delta_i$ is an $i$-fold composition of strongly linear maps. 
\end{thm}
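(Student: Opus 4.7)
The strategy is to apply the homological perturbation lemma (HPL) to the contraction $(\pi,\iota,h)$ of $\overline{\LL\RR(G)}$ onto $\overline{\LL(H(\RR(G)))}$, treating $\del'$ as the perturbation. Indeed, the formula in \S\ref{subsec:bgg} expresses the differential on $\LL\RR(G)$ as $-\del_{\RR(G)}+\del'$, so $\LL\RR(G)$ is precisely the $\del'$-perturbation of $\overline{\LL\RR(G)}$. The HPL will then produce a new contraction of $\LL\RR(G)$ onto the module $\LL(H(\RR(G)))$ equipped with the perturbed differential $\sum_{i\ge 1}(-1)^{i-1}\pi(\del'h)^{i-1}\del'\iota$. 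Composing the resulting quasi-isomorphism with the counit quasi-isomorphism $\LL\RR(G)\xra{\simeq} G$ from Proposition~\ref{HHWprop} yields $F\simeq G$, and it remains to verify that $F$ is a minimal complex of free $A$-graded $S$-modules satisfying the boundedness conditions of Definition~\ref{exuni}, and that the $\delta_i$ have the asserted form.

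\textbf{Main obstacle.} The principal technical point is verifying the local nilpotency hypothesis of the HPL, namely that for every homogeneous $z\in\overline{\LL\RR(G)}$ we have $(h\del')^{k}(z)=0$ for $k\gg 0$, so that the perturbation series converges termwise. The map $\del'$ sends $s\otimes d\otimes f$ to $\sum_i x_is\otimes d\otimes e_if$, increasing the $S$-factor's degree and shifting the $A$-degree of the $\om_E$-component by $-\deg(x_i)$, while the homotopy $h$ built from the isomorphisms $g_j^{-1}$ in Construction~\ref{contractionex} is $S$-linear with scalar matrix entries and preserves the $A$-degree of the $\om_E$-component. I plan to exploit the hypothesis that $G$ is bounded below in $A$-degrees: fixing a homomorphism $\theta\colon A\to\Z$ witnessing positivity of the grading and boundedness of $G$, each iteration of $h\del'$ decreases the value of $\theta$ on the $\om_E$-component by some $\theta(\deg(x_i))>0$, while the $\theta$-degrees occurring in a fixed homological degree of $\overline{\LL\RR(G)}$ are bounded below (inherited from the analogous bound on $\RR(G)$, hence on $H(\RR(G))$). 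This forces the iterates to vanish on any element.

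\textbf{Remaining steps.} The $i=1$ term $\pi\del'\iota$ of the perturbed differential coincides with the native differential $\delta$ of $\LL(H(\RR(G)))$: since $H(\RR(G))$ carries zero differential, the $\LL$-formula reduces to multiplication by $\sum_i x_i\otimes e_i$, which is computed by $\iota$ followed by $\del'$ and $\pi$. So the perturbed differential is indeed $\delta+\sum_{i\ge 2}\delta_i$. The terms of $F$ are the free $A$-graded $S$-modules $\bigoplus_a S(-a)\otimes_{\kk} H(\RR(G))_{(a;j)}$, which inherit boundedness below in $A$-degrees from $G$. For minimality, observe that in the factorization $\delta_i=(-1)^{i-1}\pi(\del'h)^{i-1}\del'\iota$ the maps $\pi,\iota,h$ carry scalar matrix entries—they come from an $S$-linear splitting of $\overline{\LL\RR(G)}$ whose differential has scalar entries—while each of the $i$ copies of $\del'$ contributes one linear form, so every matrix entry of $\delta_i$ is a $\kk$-linear combination of $i$-fold products of the variables and in particular lies in the maximal ideal. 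Uniqueness of minimal free resolutions then identifies $F$ as the minimal free resolution of $G$. The same factorization simultaneously displays $\delta_i$ as an $i$-fold composition of strongly linear maps, the $i$ copies of $\del'$ being the strongly linear factors.
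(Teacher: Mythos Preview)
Your overall approach is correct and matches the paper's: apply the Basic Perturbation Lemma to the contraction $(\pi,\iota,h)$ with perturbation $\del'$, then compose the resulting homotopy equivalence with the counit quasi-isomorphism $\LL\RR(G)\xra{\simeq}G$ from Proposition~\ref{HHWprop}. The paper packages the HPL step as a separate lemma (Lemma~\ref{perturbation}) stated for an arbitrary $D\in\DM(E)$ and then specializes to $D=\RR(G)$, but the content is identical. Your identification $\pi\del'\iota=\delta$, your argument for minimality via the scalar nature of $\pi,\iota,h$, and your description of each $\delta_i$ as an $i$-fold composition of strongly linear maps are all fine.

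There is one imprecision in the local-nilpotency step. You assert that $h$ ``preserves the $A$-degree of the $\om_E$-component,'' but this is not justified: the differential of $\overline{\LL\RR(G)}$ is induced by $-\del_{\RR(G)}$, and the term $\sum_i x_ic\otimes e_if$ in $\del_{\RR(G)}$ strictly decreases the $\om_E$-degree, so there is no reason the splitting of Construction~\ref{contractionex} (and hence $h$) should respect that grading. The correct invariant to track is the $A$-degree of the full $\RR(G)$-factor, i.e.\ the index $d$ in the decomposition $\LL\RR(G)_i=\bigoplus_d S(-d)\otimes_\kk\RR(G)_{(d;i)}$. Since $\del_{\RR(G)}$ has $A$-degree $\zero$, the differential of $\overline{\LL\RR(G)}$ is block-diagonal in $d$, so the splitting may be chosen compatibly and then $h$ preserves $d$; meanwhile $\del'$ strictly lowers $d$ by some $\deg(x_i)$. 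Boundedness below of $\RR(G)$ in $A$-degrees (inherited from $G$) then gives local nilpotency of $h\del'$. This is exactly how the paper argues in the proof of Lemma~\ref{perturbation}; with this correction your proof goes through.
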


The proof of Theorem \ref{samemods} requires the following technical lemma, which is a consequence of the Basic Perturbation Lemma in homological algebra. 

\begin{lemma}
\label{perturbation}
Let $D \in \DM(E)$, and assume $D$ is bounded below in $A$-degrees. Using the notation of Construction \ref{contractionex}, choose a contraction $(\pi, \iota, h)$ of $\overline{\LL(D)}$ onto $\overline{\LL(H(D))}$. The complex $\LL(D)$ is homotopy equivalent to a minimal complex of $A$-graded $S$-modules whose underlying $S$-module coincides with $\LL(H(D))$ and whose differential is given by  the formula in \eqref{perturbdiff}.
\end{lemma}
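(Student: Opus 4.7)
The plan is to deduce the lemma from the Basic Perturbation Lemma (BPL) in homological algebra. First, $\LL(D)$ is the perturbation of $\overline{\LL(D)}$ by $\del'$: by the formula in Section~\ref{subsec:bgg}, the differential of $\LL(D)$ sends $s \otimes d$ to $\sum_i x_i s \otimes e_i d - s \otimes \del_D(d) = \del'(s \otimes d) + d_{\overline{\LL(D)}}(s \otimes d)$, and since $\LL(D)$ is a complex we have $(d_{\overline{\LL(D)}} + \del')^2 = 0$, so $\del'$ is a genuine perturbation of $d_{\overline{\LL(D)}}$. I already have the contraction $(\pi, \iota, h)$ of $(\overline{\LL(D)}, d_{\overline{\LL(D)}})$ onto $(\overline{\LL(H(D))}, 0)$. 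Once the local nilpotency of $h\del'$ is verified, the BPL yields a contraction (and in particular a homotopy equivalence) of $\LL(D) = (\overline{\LL(D)}, d_{\overline{\LL(D)}} + \del')$ onto a complex with underlying $S$-module $\LL(H(D))$ and differential
\[
\tilde{\del'} = \pi \del' (1 + h\del')^{-1} \iota = \sum_{i \geq 1} (-1)^{i-1} \pi (\del' h)^{i-1} \del' \iota,
\]
where the sign convention is the one matching our chain-complex signs.

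The main obstacle is the local nilpotency of $h\del'$. I would proceed as follows. Let $\theta\colon A \to \Z$ witness that $D$ is bounded below in $A$-degrees, so that $D_{(a;\ast)} = 0$ whenever $\theta(a) < N$ for some threshold $N$. Attach to each simple tensor $s \otimes d$ the \emph{$D$-weight} $\theta(\deg(d))$. The operator $\del'$ strictly lowers the $D$-weight, since $e_i d$ has $A$-degree $\deg(d) - \deg(x_i)$ and $\theta(\deg(x_i)) > 0$ by positivity of the $A$-grading on $S$. The operator $h$ is $S$-linear, so $h(s \otimes d) = s \cdot h(1 \otimes d)$; choosing the splitting in Construction~\ref{contractionex} to be $A$-graded (which is possible, as everything in sight is $A$-graded over $S$), I may write $h(1 \otimes d) = \sum_k t_k \otimes d_k'$ with $\deg(t_k) + \deg(d_k') = \deg(d)$, and then $\theta(\deg(t_k)) \geq 0$ gives $\theta(\deg(d_k')) \leq \theta(\deg(d))$. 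Hence $h\del'$ strictly lowers the $D$-weights of all simple-tensor summands of any element, and since these weights cannot drop below $N$ without forcing the summand to vanish, $(h\del')^n$ annihilates any fixed element for $n \gg 0$.

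With BPL in place, I identify the $i = 1$ term of $\tilde{\del'}$ with the standard differential $\delta$ on $\LL(H(D))$: for a class $[d] \in H(D)_{(a;j)}$ represented by a cycle $d \in H_j \subseteq \overline{\LL(D)}_j$, the map $\iota$ sends $s \otimes [d]$ to $s \otimes d$; then $\del' \iota(s \otimes [d]) = \sum_i x_i s \otimes e_i d$, each $e_i d$ is again a cycle (since $\del_D$ is $E$-linear up to sign on a differential $E$-module), and so lies in $H_{j-1} \oplus B_{j-1}$; finally $\pi$ projects to the $H_{j-1}$ component, yielding $\sum_i x_i s \otimes [e_i d] = \delta(s \otimes [d])$. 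Separating this term matches $\tilde{\del'}$ with $\delta + \sum_{i \geq 2} \delta_i$ as in~\eqref{perturbdiff}. Minimality follows because every summand of $\tilde{\del'}$ contains at least one factor of $\del'$, whose image lies in $\m \cdot \overline{\LL(D)}$; since $\pi$, $\iota$, and $h$ are $S$-linear, the composite sends $\LL(H(D))$ into $\m \cdot \LL(H(D))$, so the new differential is minimal.
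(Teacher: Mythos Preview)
Your argument is correct and follows essentially the same route as the paper: both proofs recognize $\del'$ as a perturbation of $\overline{\LL(D)}$, verify local nilpotency of $h\del'$ via the bounded-below hypothesis and the fact that $\del'$ lowers the $A$-weight on the $D$-factor, and then invoke the Basic Perturbation Lemma. You supply more detail than the paper (the identification $\delta_1=\delta$ and the minimality check), but the strategy is the same.
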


\begin{remark}
\label{lemmaremark}
In the case where $S$ is standard graded, a result similar to Lemma \ref{perturbation} is proven by Eisenbud-Fl\o ystad-Schreyer \cite[Corollary 3.6]{EFS}, as an application of \cite[Lemma 3.5]{EFS}. However, outside of the standard graded case, the output of the multigraded BGG functor $\LL$ cannot be interpreted as the totalization of a bicomplex, and so \cite[Lemma 3.5]{EFS} does not apply to our situation. This is why we need homological perturbation techniques to prove Lemma \ref{perturbation}. This connection between Eisenbud-Fl\o ystad-Schreyer's results and
the Basic Perturbation Lemma goes back to Coand\u{a} in \cite{coanda}. 
\end{remark}

\begin{proof}[Proof of Lemma \ref{perturbation}]
The $S$-linear endomorphism $\del'$ of $\overline{\LL(D)}$ given by multiplication on the left by $\sum_{i = 0}^n x_i \otimes e_i$ is a ``perturbation" of the differential on $\overline{\LL(D)}$, meaning that $(\del')^2 = 0$ and that adding $\del'$ to the differential on $\overline{\LL(D)}$ yields a complex (namely $\LL(D)$, in this case). The $S$-linear endomorphism $h\del'$ of $\overline{\LL(D)}$ sends an element in the summand $D_{(a; j)} \otimes S(-a)$ of $\overline{\LL(D)}$ to $\bigoplus_{i = 0}^n D_{(a - \deg(x_i); j)} \otimes S(-a + \deg(x_i))$. 
By our bounded below assumption on $D$, the endomorphism $h \del'$ is therefore locally nilpotent, in the sense of \cite[Remark A.6]{coanda}. The statement now follows from the Basic Perturbation Lemma \cite[Lemma A.4]{coanda}.
\end{proof}

\begin{proof}[Proof of Theorem \ref{samemods}]
By Proposition \ref{HHWprop}, there is a quasi-isomorphism $\LL \RR(G) \xra{\simeq} G$. Since the underlying module of $G$ is bounded below in $A$-degrees, the same is true of the $E$-module $\RR(G)$. It therefore follows from Lemma \ref{perturbation} that the complex $\LL \RR(G)$ is homotopy equivalent to a minimal complex $F$ whose underlying $S$-module coincides with $\LL(H(\RR(G)))$ and whose differential is given by the formula \eqref{perturbdiff}. Composing, we arrive at a quasi-isomorphism $F \xra{\simeq} G$. The complex $F$ is homologically bounded below and has terms that are bounded below in $A$-degrees, since $G$ has these properties; it follows that $F$ is the minimal free resolution of $G$, in the sense of Definition \ref{exuni}. Finally, observe that, for $i \ge 1$, the endomorphism $(\del'h)^{i-1}\del'$ of $\overline{\LL\RR(G)}$ is an $i$-fold composition of strongly linear maps.
\end{proof}

\begin{remark}
A version of Theorem \ref{samemods} involving differential $E$-modules, rather than complexes of $S$-modules, can be proven in much the same way. Since we do not need such a statement in this paper, we leave the details to the reader.
\end{remark}

\begin{remark}
\label{rmks:linear part}
Let $F$ be a bounded complex of finitely generated, free $S$-modules. In the standard graded case, the complex $\LL(H(\RR(F)))$ is called the ``linear part" of the minimal free resolution of $F$ \cite[Section 3]{EFS}. In other words, if we choose a basis for $F$ and write the differential $\del_F$ as a matrix, $\LL(H(\RR(F)))$ is isomorphic to the complex obtained by erasing any term in any entry of $\del_F$ that is not a $\kk$-linear combination of the variables. Theorem~\ref{samemods} suggests that one may extend Eisenbud-Fl{\o}ystad-Schreyer's notion of the ``linear part'' of a complex to the multigraded setting in the following way. We define the \defi{strongly linear part} (not to be confused with the strongly linear \emph{strand}, which we define in the next section) of a minimal, homologically bounded below complex $F$ of free $S$-modules whose terms are bounded below in $A$-degrees to be the complex $\LL(H(\RR(F)))$. 

We will not undertake a detailed study of strongly linear parts in this paper. However, let us compute one example of a strongly linear part. Say $S = \kk[x_0,x_1,x_2,x_3]$ is the Cox ring of a Hirzebruch surface of type 3, so that $A =\ZZ^{2}$, and the grading on $S$ is given by $\deg(x_0)=(1,0) = \deg(x_2), \deg(x_1)=(-3,1)$, and 
$\deg(x_3)=(0,1).$ The minimal free resolution $F$ of $M = S/(x_3 - x_0^3x_1, x_2)$ is the Koszul complex
\begin{equation}
\label{hirzres}
0 \from S \xla{\left[\begin{smallmatrix} x_3 - x_0^3x_1 & x_2 \end{smallmatrix}\right]} S(0,-1) \oplus S(-1,0) \xla{\left[\begin{smallmatrix} -x_2 \\ x_3 - x_0^3x_1 \end{smallmatrix}\right]} S(-1,-1) \from 0.
\end{equation}
One can compute directly that $\LL(H(\RR (M)))$ is the complex
$$
0 \from S \xla{\left[\begin{smallmatrix} x_3  & x_2 \end{smallmatrix}\right]} S(0,-1)^{2} \xla{\left[\begin{smallmatrix} -x_2  \\ x_3  \end{smallmatrix}\right]} S(0,-2) \from 0.
$$
\end{remark}

Let us now turn to the proof of Theorem \ref{maxlinear}(1). We start with some notation.

\begin{notation}
\label{effective}
Let $M$ be an $A$-graded $S$-module. 
We let $\Eff(M)$ denote the set $\{a \in A \text{ : } M_a \ne 0\}$, and we equip $\Eff(M)$ with the partial order given by $a \ge a'$ if and only if $a - a' \in \Eff(S)$. To explain the notation: when $M = S$, and $S$ is the Cox ring of a smooth projective toric variety $X$, then $\Eff(S)$ is the effective cone of $X$.
\end{notation}

We record the following simple result:
\begin{lemma}
\label{L(K)}
Let $M$ be a graded $A$-graded $S$-module, and suppose $a \in \Eff(M)$ is a minimal element under the partial ordering described in Notation \ref{effective}. Let $K_a(M)$ be as defined in \eqref{Kmod}. The natural map $K_a(M) \to H(\RR(M))$ is injective. In particular, we have a quasi-split injection
$
\LL(K_a(M)) \to \LL(H(\RR(M))).
$
\end{lemma}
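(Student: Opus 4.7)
The plan is to reduce the injectivity claim to an elementary minimality argument about $A$-degrees, then deduce the quasi-split assertion from the fact that the underlying $S$-module of $\LL(D)$ is determined by the graded $\kk$-vector space structure of $D$ alone.

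First I would unpack how the differential of $\RR(M)$ interacts with the $A$-grading. Writing
$$\RR(M) = \bigoplus_{a' \in A} M_{a'} \otimes_\kk \om_E(-a';0),$$
the formula in \S\ref{subsec:bgg} shows that the differential sends the summand indexed by $a'$ into $\bigoplus_{i=0}^n M_{a' + \deg(x_i)} \otimes_\kk \om_E(-(a'+\deg(x_i));0)$, that is, it increases the decorating index by some $\deg(x_i) \in \Eff(S)$. This is the only structural input needed.

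Next, suppose $z \in K_a(M)$ is a boundary, so $z = \del_{\RR(M)}(y)$ for some $y = \sum_{a'} y_{a'} \in \RR(M)$ with $y_{a'} \in M_{a'} \otimes_\kk \om_E(-a';0)$. By Step 1, the only $y_{a'}$ that can contribute to the $a$-summand of $\del_{\RR(M)}(y)$ are those with $a' = a - \deg(x_i)$ for some $i$. Positivity of the $A$-grading forces $\deg(x_i) \neq 0$, so $a - \deg(x_i) < a$ strictly in the partial order from Notation \ref{effective}. By minimality of $a$ in $\Eff(M)$ we conclude $M_{a-\deg(x_i)} = 0$, hence $y_{a-\deg(x_i)} = 0$ for every $i$, and therefore $z = 0$. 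This shows $K_a(M) \cap B(\RR(M)) = 0$, i.e., the natural map $K_a(M) \to H(\RR(M))$ is injective.

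For the final sentence, I would observe that the inclusion $K_a(M) \hookrightarrow H(\RR(M))$ is a map of $A \oplus \Z$-graded $E$-modules, so it admits a splitting as graded $\kk$-vector spaces. Since the underlying $S$-module of $\LL(D)$ is $\bigoplus_{(a;j)} S(-a) \otimes_\kk D_{(a;j)}$, which depends only on the graded vector space structure of $D$, applying $\LL$ and using its exactness (Proposition \ref{HHWprop}) promotes this vector space splitting to an $S$-module splitting of $\LL(K_a(M)) \hookrightarrow \LL(H(\RR(M)))$. This is exactly the definition of a quasi-split subcomplex.

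The only delicate point — really the only content — is the grading bookkeeping in Steps 1 and 3, where positivity of the grading is used to ensure $\deg(x_i) \neq 0$ so that minimality of $a$ can be invoked. No homological perturbation or BGG machinery is needed beyond the already-established exactness of $\LL$.
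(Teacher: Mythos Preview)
Your proof is correct and follows essentially the same approach as the paper, which proves the lemma in two sentences: minimality of $a$ implies no element of $K_a(M)$ is a boundary, and the quasi-split injection then follows from exactness of $\LL$. Your Steps 1--2 spell out precisely the grading bookkeeping behind the first sentence, and your Step 3 gives an explicit reason for the quasi-splitting (the underlying $S$-module of $\LL(D)$ depends only on the graded $\kk$-vector space of $D$); this is equivalent to the paper's appeal to exactness, since the cokernel $\LL(H(\RR(M))/K_a(M))$ has free terms.
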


\begin{proof}
By the minimality of $a$, no element of $K_a(M)$ is in the image of $\del_{\RR(M)}$, i.e. the natural map $K_a(M) \to H(\RR(M))$ is injective. The last statement follows from the exactness of the functor $\LL$, which is a consequence of Proposition \ref{HHWprop}.
\end{proof}

\begin{proof}[Proof of Theorem \ref{maxlinear}(1)]
By Lemma \ref{L(K)}, we have a natural quasi-split injection
$$
\gamma\colon \LL(K_d(M)) \to \LL(H(\RR(M))).
$$
Since $S$ is positively graded, and $M$ is generated in a single degree, $M$ is bounded below in $A$-degrees; we can therefore apply Theorem \ref{samemods} to $M$. We need only show that $\delta_i \gamma= 0$ for $i \ge 2$, where the maps $\delta_i$ are as in the statement of Theorem \ref{samemods}. In fact, one can check that $h \del' \iota \gamma = 0$: the key point is that no element of $K_d(M)$ is in the image of $\del_{\RR(M)}$, and this forces $h$ to vanish on the image of $\del' \iota \gamma$. 
\end{proof}

\subsection{Proof of Theorem \ref{maxlinear}(2)}

We will need the following multigraded analogue of \cite[Proposition 2.1]{EFS}, which shows that strongly linear complexes are exactly those obtained by applying $\LL$ to an $E$-module. The proof is nearly identical to the standard graded version, so we omit it.

\begin{prop}
\label{linearprop}
A complex $C \in \Com(S)$ is strongly linear if and only if $C \cong \LL(N)$ for some $N \in \DM(E)$ with trivial differential. 
\end{prop}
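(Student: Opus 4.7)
The plan is to verify each direction directly, using the explicit formulas for $\LL$ from Section~\ref{subsec:bgg}.

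For the ``if'' direction, I would take $N \in \DM(E)$ with $\del_N = 0$, pick a homogeneous $\kk$-basis of each $N_{(a;j)}$, and observe that these bases collectively yield an $A$-graded $S$-basis of each term $\LL(N)_j = \bigoplus_a S(-a) \otimes_\kk N_{(a;j)}$. Since $\del_N = 0$, the formula for the differential on $\LL(N)$ simplifies to $s \otimes d \mapsto \sum_i x_i s \otimes e_i d$, whose matrix entries in the chosen basis are manifestly $\kk$-linear combinations of the $x_i$. Hence $\LL(N)$ is strongly linear.

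The substantive direction is the converse. Given a strongly linear $C$ together with homogeneous $S$-bases $\{v_{j,k}\}$ of the $C_j$ in which each $\del_C$ is a matrix of $\kk$-linear forms, I would write
\[
\del_C(v_{j,k}) = \sum_{i=0}^n \sum_l c_{i,k,l}^{(j)}\, x_i\, v_{j-1,l}, \qquad c_{i,k,l}^{(j)} \in \kk,
\]
where, for degree reasons, the inner sum ranges only over $l$ with $\deg v_{j-1,l} = \deg v_{j,k} - \deg(x_i)$. I would then build $N$ in three steps: (i) as an $A \oplus \Z$-graded $\kk$-vector space, place $v_{j,k}$ in bidegree $(\deg v_{j,k};\, j)$; (ii) define a right $E$-action by $v_{j,k} \cdot e_i := (-1)^j \sum_l c_{i,k,l}^{(j)}\, v_{j-1,l}$, extended $\kk$-linearly, with the sign $(-1)^j$ inserted in order to reconcile the sign conventions relating left and right $E$-actions in the formula for $\LL$'s differential; (iii) equip $N$ with the zero differential.

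The key verification is that the rule in (ii) descends to a right $E$-module structure, i.e., respects $e_i e_{i'} + e_{i'} e_i = 0$ (with $e_i^2 = 0$ as the special case $i = i'$). Expanding $\del_C^2(v_{j,k}) = 0$ and reading off the coefficient of each $x_i x_{i'} v_{j-2,m}$ yields the quadratic identity
\[
\sum_l \bigl( c_{i,k,l}^{(j)} c_{i',l,m}^{(j-1)} + c_{i',k,l}^{(j)} c_{i,l,m}^{(j-1)} \bigr) = 0,
\]
which is exactly what the required relation demands; the paired signs $(-1)^j(-1)^{j-1} = -1$ appear uniformly on both sides and cancel. Once $N$ is seen to lie in $\DM(E)$, the identification $\LL(N) \cong C$ is tautological on the chosen bases, and unwinding the formula for the differential on $\LL(N)$ recovers $\del_C$ thanks to the sign in (ii).

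The main obstacle is the sign bookkeeping in step (ii): aligning the $(-1)^j$ twist on the right $E$-action with the conventions built into $\LL$'s differential (which uses the left $E$-action, related to the right one via $em = (-1)^{\on{aux}(e)\on{aux}(m)}me$). Beyond this, the argument is a direct comparison of coefficients, mirroring the standard graded case \cite[Proposition 2.1]{EFS}.
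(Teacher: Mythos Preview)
Your proposal is correct and is precisely the standard graded argument of \cite[Proposition 2.1]{EFS} adapted to the multigraded setting, which is exactly what the paper intends: the paper omits the proof, noting it is ``nearly identical to the standard graded version.'' Your care with the sign $(-1)^j$ relating the left and right $E$-actions is the only point requiring attention beyond the classical case, and you have handled it correctly.
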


\begin{remark}
There is a parallel theory of strongly linear differential $E$-modules, and a version of Proposition \ref{linearprop} holds for such objects as well.
\end{remark}
\begin{proof}[Proof of Theorem \ref{maxlinear}(2)]
Suppose $L$ is a quasi-split, strongly linear subcomplex of $F$. Applying Proposition \ref{linearprop}, choose an $E$-module $N$ such that $L \cong \LL(N)$. Let $\psi$ denote the composition
$$
N \to H(\RR\LL(N)) \to H(\RR(F)) \to H(\RR(M)),
$$
where the first map is induced by the unit $\eta$ of the adjunction \eqref{adjunction}, the second is induced by the inclusion $\LL(N) \into F$, and the third is induced by the surjection $F \onto M$. It suffices to show:
\begin{enumerate}
\item[(a)] $\psi$ is injective, and 
\item[(b)] the image of $\psi$ is contained in $K_d(M)$ (note that, since $d$ is a minimal element of $\Eff(M)$, Lemma \ref{L(K)} implies that $K_d(M)$ is a submodule of $H(\RR(M))$).
\end{enumerate}
Indeed, the result is immediate from (a), (b), and the exactness of $\LL$ (Proposition \ref{HHWprop}). 

The maps $N \to \RR\LL(N)$ and $\RR(F) \to \RR(M)$ are quasi-isomorphisms (again by Proposition \ref{HHWprop}), so, to prove (a), we need only check that the map $H(\RR\LL(N)) \to H(\RR(F))$ is injective. It follows from the naturality of the identification in \cite[Proposition 2.10(a)]{tate} that we have a commutative square
$$
\xymatrix{
\Tor^S_*(k, \LL(N)) \ar[r] \ar[d]^-{\cong} & \Tor^S_*(k, F) \ar[d]^-{\cong} \\
H(\RR\LL(N)) \ar[r] & H(\RR(F))
}
$$
of $A \oplus \Z$-graded $\kk$-vector spaces. Since the top horizontal map is injective, the bottom horizontal map is as well. This proves (a). 

As for (b): by Lemma \ref{technical}, the image of the quasi-isomorphism $\eta\colon N \xra{\simeq} \RR \LL(N)$ lies in
$$
\bigoplus_{(a, j) \in A \oplus \Z} N_{(a; j)} \otimes \om_E(-a; -j).
$$
The composition $\RR \LL(N) \into \RR(F)  \onto \RR(M)$ sends each summand $N_{(a; j)} \otimes \om_E(-a; -j)$ with $j \ne 0$ to 0. Since $M$ is generated in degree $d$, $F_0$ is generated in degree $d$ as well. It follows that $\LL(N)_0$ is generated in degree $d$, since it is a summand of $F_0$; equivalently, $N_{(a; 0)} = 0$ for $a \ne d$. We conclude that $\im(\psi) \subseteq K_d(M)$. This proves (b).

\end{proof}

\begin{remark}
Let $M$ be an $A$-graded $S$-module that is bounded below in $A$-degrees. Proposition \ref{linearprop} and \cite[Theorem 2.14]{tate} imply that the minimal free resolution of $M$ is strongly linear if and only if there is a quasi-isomorphism $H(\RR(M)) \xra{\simeq} \RR(M)$.  If we further assume that $M$ is generated in a single degree $d$, then the results in this section can be used to show that this is also equivalent to the natural map $K_d(M) \to H(\RR(M))$ being an isomorphism.
\end{remark}

\section{Linear strands of multigraded free resolutions}
\label{strandsection}
We may now define the strongly linear strand of a multigraded minimal free resolution:

\begin{defn}
\label{linstranddef}
Let $M$ be an $A$-graded $S$-module that is generated in a single degree $d$. The \defi{strongly linear strand} of the minimal free resolution $F$ of $M$ is the maximal strongly linear quasi-split subcomplex of $F$, whose existence and uniqueness is guaranteed by Theorem \ref{maxlinear}. Explicitly, the strongly linear strand of $F$ is given by $\LL(K_d(M))$, where $K_d(M)$ is as in \eqref{Kmod}.
\end{defn}

By \cite[Corollary 7.11]{eisenbud}, Definition \ref{linstranddef} recovers the usual definition of the linear strand when $S$ is standard graded.

\begin{example}
In Example~\ref{ex:maximal}, the complex $\LL(K_{\zero} (M))$ is the strongly linear strand. Additionally, a straightforward calculation shows that the strongly linear strand of the minimal free resolution \eqref{hirzres} is
$0 \from S \xla{x_2} S(-1, 0) \from 0.$ 
\end{example}

Let $M$ be an $A$-graded $S$-module that is bounded below in $A$-degrees, and let $F$ be its minimal free resolution. If $M$ is not generated in a single degree, then defining its strongly linear strand as the maximal strongly linear quasi-split subcomplex of $F$ no longer makes sense, as the following simple example in the standard grading setting illustrates. 

\begin{example}\label{ex:more than one degree}
Let $S=\kk[x]$ with $\deg(x)=1$, and let $M=S/(x) \oplus S(-1)/(x)$. The minimal free resolution $F$ of $M$ is
\[
0 \from S\oplus S(-1) \xla{\left[\begin{smallmatrix} x&0\\0&x\end{smallmatrix} \right]} S(-1) \oplus S(-2)\gets 0.
\]
The standard convention would be to say that the linear strand of $F$ is $S\overset{x}{\gets} S(-1)$. But, $F$ itself is strongly linear, so its linear strand is not the maximal strongly linear quasi-split subcomplex of $F$.
\end{example}

Instead, we use BGG to give a more general definition of strongly linear strands.

\begin{defn}
\label{moregeneral}
Let $M$ be an $A$-graded $S$-module that is bounded below in $A$-degrees, $F$ its minimal free resolution, and $a \in \Eff(M)$ (see Notation \ref{effective}) a minimal element. The \defi{$a$-strongly linear strand of $F$} is defined to be $\LL(K_a(M))$, where $K_a(M)$ is as defined in \eqref{Kmod}. When $\Eff(M)$ has a \emph{unique} minimal element $a$ (e.g. when $A = \Z$), we say $\LL(K_a(M))$ is the \defi{strongly linear strand of $F$}. 
\end{defn}

\begin{remark}
When $S$ is standard graded, 
Definition \ref{moregeneral} recovers the usual definition of the linear strand \cite[Corollary 7.11]{eisenbud}.
\end{remark}

\begin{lemma}
\label{nonzero}
Let $F$ be as in Definition \ref{moregeneral}, and let $a$ be a minimal element of $\Eff(F)$. The $a$-strongly linear strand of $F$ is a nonzero quasi-split subcomplex of $F$.
\end{lemma}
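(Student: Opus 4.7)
The plan is to prove the two claims separately: (i) $\LL(K_a(M))$ is nonzero, and (ii) it is a quasi-split subcomplex of $F$. As a preliminary, the minimal elements of $\Eff(F)$ and $\Eff(M)$ coincide, since every $A$-degree of a generator of some $F_j$ is, by the minimality of $F$, effectively above a generator degree of $F_0$, and the minimal generator degrees of $F_0$ are exactly the minimal elements of $\Eff(M)$. In particular $M_a \neq 0$.

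For (i), fix a nonzero $m \in M_a$ and let $1^* \in \om_E \cong \underline{\Hom}_\kk(E,\kk)$ denote the socle functional dual to $1 \in E$; it lies in auxiliary degree $0$ and is annihilated by every $e_i$ under the right $E$-action. The element $m \otimes 1^* \in M_a \otimes_\kk \om_E(-a;0) \subseteq \RR(M)$ therefore satisfies $\del_{\RR(M)}(m \otimes 1^*) = \sum_i x_i m \otimes e_i\cdot 1^* = 0$, so $m \otimes 1^* \in K_a(M)$. Consequently $\LL(K_a(M))_0$ contains the nonzero free summand $S(-a) \otimes_\kk \kk\cdot(m \otimes 1^*)$.

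For (ii), I would run the proof of Theorem~\ref{maxlinear}(1) verbatim in the present setting. That proof never used that $M$ is generated in a single degree, only (via Lemma~\ref{L(K)}) that the distinguished degree is a minimal element of $\Eff(M)$ and (via Theorem~\ref{samemods}) that $M$ is bounded below in $A$-degrees, both of which hold here. Explicitly, Lemma~\ref{L(K)} supplies the quasi-split injection $\gamma\colon \LL(K_a(M)) \into \LL(H(\RR(M)))$, and Theorem~\ref{samemods} identifies $F$ with $\LL(H(\RR(M)))$ equipped with the perturbed differential $\delta + \sum_{i \ge 2}\delta_i$, where $\delta_i = (-1)^{i-1}\pi(\del'h)^{i-1}\del'\iota$. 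So $\gamma$ realizes $\LL(K_a(M))$ as a quasi-split $S$-submodule of $F$, and promoting this to a subcomplex inclusion reduces to checking $\delta_i\gamma = 0$ for every $i \ge 2$. Factoring $(\del'h)^{i-1}\del'\iota\gamma$ so that $h\del'\iota\gamma$ appears at the right end, the verification reduces to the single identity $h\del'\iota\gamma = 0$, which follows from $K_a(M) \cap \im(\del_{\RR(M)}) = 0$ (the key point in the proof of Lemma~\ref{L(K)}): for an appropriate choice of splitting in Construction~\ref{contractionex}, the image of $\iota\gamma$ lands in the homology summand and $\del'\iota\gamma$ is forced to miss the boundary summand on which $h$ is supported. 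The main obstacle is essentially bookkeeping---confirming that the proof of Theorem~\ref{maxlinear}(1) generalizes unchanged; no new ideas are required.
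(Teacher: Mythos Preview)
Your proof is correct and follows the paper's approach: both use a socle element of $\om_E$ to exhibit a nonzero class in $K_a(M)$, and both invoke Lemma~\ref{L(K)} for the quasi-split subcomplex claim. Your argument is in fact more complete than the paper's terse citation of Lemma~\ref{L(K)}, since you correctly spell out that the full subcomplex claim requires rerunning the perturbation argument from the proof of Theorem~\ref{maxlinear}(1), and that this goes through verbatim once ``generated in a single degree'' is weakened to ``bounded below in $A$-degrees.''
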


\begin{proof}
The $a$-strongly linear strand is a quasi-split subcomplex of $F$ by Lemma~\ref{L(K)}. It is nonzero because a generator of the socle of $\bigoplus_{j \in \Z} (F_j)_a \otimes_\kk\om_E(-a ; -j)$ always gives a nonzero element of $K_a(F)$. 
\end{proof}

\section{A multigraded Linear Syzygy Theorem}
\label{LSTsection}

We now prove Theorem \ref{intromlst} and Corollary~\ref{greencor}.
By relying on the theory of strongly linear strands that we have developed, we are able to largely derive these results from adaptations of arguments of Eisenbud and Green. We fix the following

\begin{notation}
\label{notation}
Let $a \in A$, and let $W \subseteq S$ be the $\kk$-vector subspace of $S$ generated by the variables. Let $R_a(M)$ denote the subvariety of $\Spec(\kk[W \otimes_\kk M_a])$ defined by:
$$
R_a(M) = \{w \otimes m \in W \otimes_\kk M_a \text{ : } wm = 0 \text{ in } M\}.
$$
\end{notation}
We will prove the following slightly more general version of Theorem \ref{intromlst} involving modules that are not necessarily generated in a single degree:

\begin{thm}
\label{LST}
Let $M$ be an $A$-graded $S$-module that is bounded below in $A$-degrees, and let $F$ be its minimal free resolution. Let $a \in \Eff(M)$ (see Notation \ref{effective}) be a minimal element. The length of the strongly linear strand of $F$ is at most $\max\{ \dim_\kk M_{a} - 1, \dim R_a(M)\}$.
\end{thm}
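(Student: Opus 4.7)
The plan is to translate the length of the strand into a statement about the $E$-module $K_a(M)$ using Definition~\ref{moregeneral}, and then adapt Green's original proof of the classical Linear Syzygy Theorem.

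First, by Definition~\ref{moregeneral}, the $a$-strongly linear strand of $F$ equals $\LL(K_a(M))$. Since $\LL(D)_j = \bigoplus_{b \in A} S(-b) \otimes_\kk D_{(b;j)}$, its length is precisely $p := \max\{j \in \Z : K_a(M)_{(\ast;j)} \ne 0\}$, so it suffices to prove
\[
p \le \max\{\dim_\kk M_a - 1,\, \dim R_a(M)\}.
\]
Now $K_a(M)$ is an $E$-submodule of $M_a \otimes_\kk \om_E(-a;0)$, and after fixing a nonzero top-degree element $\alpha \in K_a(M)_{(\ast;p)}$, we may view it as a tensor in $M_a \otimes_\kk (\om_E(-a;0))_{p}$ and consider its tensor rank $r$.

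This sets up Green's rank dichotomy. If $r \ge p+1$, then $\dim_\kk M_a \ge r \ge p+1$, giving the first bound immediately. If instead $r \le p$, then Green's exchange argument, which exploits the $E$-module structure on $K_a(M)$ to generate new elements of lower auxiliary degree from $\alpha$, produces a subvariety of rank-one tensors $w \otimes m \in W \otimes_\kk M_a$ satisfying $wm = 0$, i.e., a subvariety of $R_a(M)$, of dimension at least $p$. In either case the desired inequality holds.

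The main obstacle is verifying that Green's rank-based argument, originally phrased in the standard graded setting where all variables share the same $A$-degree, transfers intact to the multigraded setting. The critical observation is that the auxiliary $\Z$-grading on $E$ remains uniform — each $e_i$ has auxiliary degree $-1$ regardless of its $A$-degree — and it is the auxiliary grading alone that governs the length of $\LL(K_a(M))$. The $A$-grading enters only through the dimensions of $M_a$ and $W$ and the tensor rank of $\alpha$, all of which behave as in the classical case. With the structural results of Theorem~\ref{maxlinear} and Lemma~\ref{technical} already in hand, Green's proof adapts with essentially cosmetic changes; this is consistent with the authors' remark that the main novelty of Theorem~\ref{LST} lies in its statement rather than its proof.
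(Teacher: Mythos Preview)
Your reduction to bounding the top auxiliary degree of $K_a(M)$ is correct, and your observation that the auxiliary $\Z$-grading on $E$ is uniform (each $e_i$ has auxiliary degree $-1$ regardless of its $A$-degree) is precisely why the classical argument transfers to the multigraded setting. On this high-level point you and the paper agree.

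However, the specific argument you sketch does not match the paper's, and your case $r \le p$ is a genuine gap. You assert that when a fixed top-degree element $\alpha$ has tensor rank $r \le p$, ``Green's exchange argument'' produces a $p$-dimensional subvariety of $R_a(M)$; but neither Green's paper nor \cite[\S7D]{eisenbud} contains an argument organized this way, and you give no mechanism for extracting a $p$-dimensional family of rank-one syzygies from a single low-rank tensor. This is the substantive core of the theorem, and it is not enough to name an argument without supplying it.

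The paper's proof follows \cite[\S7D]{eisenbud} with a different case split, on whether $\dim R_a(M) \le m_a - 1$ or $\dim R_a(M) \ge m_a$ (where $m_a = \dim_\kk M_a$). In the first case, one passes to the $E$-linear dual: writing $Q$ for the cokernel of the dual of the map $M_a \otimes_\kk \omega_E(-a;0) \to \bigoplus_i M_{a+\deg(x_i)} \otimes_\kk \omega_E(-a-\deg(x_i);-1)$, one has the strand equal to $\LL(Q^*)$, and since $Q$ is generated in auxiliary degree $0$ it suffices to show that every exterior monomial with $m_a$ factors annihilates $Q$; from there the argument in \cite[\S7D]{eisenbud} applies verbatim. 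In the second case one argues by induction on the number of variables via Lemma~\ref{indstep}(2): dropping one variable decreases the length bound by $1$, and one continues until the first case applies. You have invoked neither the dualization nor this inductive variable-reduction, and these are the two concrete mechanisms the proof actually uses.
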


We need the following lemma.
\begin{lemma}
\label{indstep}
Let $M$ and $a$ be as in Theorem \ref{LST}. Let $I \subseteq \{0,\dots, n\}$ be nonempty, set $S_I := \kk[x_i \text{ : } i \in I]$, and define $E_I$ similarly. Let $M_I$ denote the $S_I$-module obtained by applying restriction of scalars to $M$ along the inclusion $S_I \into S$. 
\begin{enumerate}
\item $K_a(M_I) = \{y \in K_a(M) \text{ : } ye_i = 0 \text{ for all $i \notin I$}\}$.
\item Set $c = n + 1 - \#I$. If the length of the $a$-strongly linear strand of the minimal free resolution of $M_I$ is $\l$, then the length of the $a$-strongly linear strand of the minimal free resolution of $M$ is at most $\l + c$.
\end{enumerate}
\end{lemma}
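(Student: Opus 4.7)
The plan is to prove Part (1) first as an explicit identification, then derive Part (2) by induction on $c$, with the crux being the case $c = 1$.

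For Part (1), I would exploit the tensor decomposition $E \cong E_I \otimes_\kk E_{I^c}$, where $I^c := \{0, \ldots, n\} \setminus I$, which dualizes to $\om_E \cong \om_{E_I} \otimes_\kk \om_{E_{I^c}}$ as $E_I$-modules (after the appropriate grading shift). A direct computation shows that the subspace of $\om_E$ annihilated on the right by every $e_i$ with $i \in I^c$ is exactly $\om_{E_I} \otimes \widehat{1}_{E_{I^c}}$, where $\widehat{1}_{E_{I^c}}$ is the socle generator of $\om_{E_{I^c}}$; as an $E_I$-module, this subspace is canonically isomorphic to $\om_{E_I}$. Tensoring with $M_a = (M_I)_a$ and restricting $\del_{\RR(M)} = \sum_i x_i(-)e_i$ to this subspace, the terms with $i \in I^c$ vanish (the right annihilation condition forces the corresponding left annihilation up to sign), so the restricted differential is $\sum_{i \in I} x_i(-)e_i$, which agrees with $\del_{\RR_I(M_I)}$. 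Hence $K_a(M_I)$ corresponds exactly to $\{y \in K_a(M) : y e_i = 0 \text{ for all } i \notin I\}$ under this identification.

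For Part (2), I would argue by induction on $c$. The base case $c = 0$ is trivial since $M_I = M$. For the inductive step with $c \geq 1$, pick $i_0 \in I^c$ and set $I' = I \cup \{i_0\}$, so $\#(I')^c = c - 1$. By the inductive hypothesis applied to $I'$, the length of $\LL_S(K_a(M))$ is at most the length of $\LL_{S_{I'}}(K_a(M_{I'}))$ plus $c - 1$. It therefore suffices to establish the $c = 1$ case of the lemma applied to the pair $(S_{I'}, M_{I'})$ with the subset $I \subseteq I'$.

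For the $c = 1$ case, the key observation is that $K_a(M)$ is closed under the right $E$-action on $M_a \otimes \om_E(-a; 0)$, since $\del_{\RR(M)}$ is right $E$-linear. The length of $\LL_S(K_a(M))$ equals the maximum auxiliary degree $k$ of a nonzero element of $K_a(M)$; pick such a $y$, and let $\{i_0\} = I^c$. If $y e_{i_0} = 0$, then by Part (1), $y \in K_a(M_I)$ already, so $k \leq \ell$. Otherwise $y e_{i_0}$ is a nonzero element of $K_a(M)$ in auxiliary degree $k - 1$ that is annihilated by $e_{i_0}$ (since $e_{i_0}^2 = 0$), so $y e_{i_0} \in K_a(M_I)$ by Part (1), giving $k - 1 \leq \ell$. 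In either case $k \leq \ell + 1 = \ell + c$. The main technical obstacle lies in Part (1): carefully aligning sign conventions and grading shifts under the decomposition $\om_E \cong \om_{E_I} \otimes \om_{E_{I^c}}$, in particular the passage between left and right $E$-actions via $e m = (-1)^{\on{aux}(e) \on{aux}(m)} m e$. Once this identification is in place, Part (2) is essentially combinatorial and rests on the $E$-closedness of $K_a(M)$ plus the fact that right multiplication by $e_i$ lowers auxiliary degree by one.
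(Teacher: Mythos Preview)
Your proposal is correct and follows essentially the same approach as the paper: for Part (1) you spell out the identification via the decomposition $\om_E \cong \om_{E_I} \otimes \om_{E_{I^c}}$ (the paper simply cites \cite[Corollary 7.12]{eisenbud}), and for Part (2) both you and the paper reduce by induction to $c=1$ and use that right multiplication by $e_{i_0}$ on $K_a(M)$ has kernel and image contained in $K_a(M_I)$. The only cosmetic difference is that the paper packages the $c=1$ step as the left exact sequence $0 \to K_a(M_I) \hookrightarrow K_a(M) \xra{e_{i_0}} K_a(M_I)(-\deg(x_{i_0});-1)$, whereas you argue element-wise on a top-auxiliary-degree element.
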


\begin{proof}
The proof of (1) is the same as that of \cite[Corollary 7.12]{eisenbud}. The proof of (2) is essentially the same as that of \cite[Corollary 7.13]{eisenbud}, but slightly different notationally, so we include it here. By induction, we may assume $c = 1$; say $I = \{0, \dots, \widehat{i}, \dots, n\}$. By (1), we have a left exact sequence
$
0 \to K_a(M_I) \into K_a(M) \xra{e_i} K_a(M)(-\deg(x_i); -1).
$
The image of the rightmost map lies in $K_a(M_I)(-\deg(x_i);-1)$. It follows that $K_a(M)$ vanishes in auxiliary degrees $i > \l + 1$.
\end{proof}

\begin{proof}[Proof of Theorem \ref{LST}]
We proceed exactly as in \cite[\S 7D]{eisenbud}. Write $m_a = \dim_\kk M_a$. Assume first that $\dim(R_a(M)) \le m_a - 1$. We must show that the length of the $a$-strongly linear strand $L$ of $F$ is at most $m_a - 1$. Given a $\kk$-vector space $N$, we let $N^* = \Hom_{\kk}(N, \kk)$. Let
$$
Q = \coker(\bigoplus_{i = 0}^{n} M_{a + \deg(x_i)}^* \otimes_\kk E(\deg(x_i) + a; 1) \xra{\a} M_{a}^* \otimes_\kk E(a ;0)),
$$
where $\a$ is the dual of the map $M_a \otimes_\kk \om_E(-a ; 0) \to  \bigoplus_{i = 0}^n M_{a + \deg(x_i)} \otimes_\kk \om_E(-a -\deg(x_i); -1)$ induced by $\del_\RR$. By definition, $L = \LL(Q^*)$. It suffices to show that $Q$ is concentrated in auxiliary degrees $-(m_a - 1), \dots, 0$. Since $Q$ is generated in auxiliary degree $0$, it suffices to show that $Q$ is annihilated by every monomial in $E$ with $m_a$ factors. The proof in this case now follows exactly as in \cite[\S 7D]{eisenbud}. The case where $\dim(R) \ge m_0$ also follows just as in \cite[\S 7D]{eisenbud}, noting that one uses Lemma \ref{indstep}(2) for the inductive step.
\end{proof}

\begin{remark}
\label{rem:sharp}
In the standard graded case, both of the bounds $\dim R_a(M)$ and $\dim_\kk M_a - 1$ may be achieved. Indeed, the bound $\dim R_a(M)$ is attained by taking $M = \kk$ \cite[Example 7.2]{eisenbud}, and the bound $\dim_\kk M_a - 1$ is achieved by taking $M = \Ext_S^{n-1}(S/I, S(n+1))$, where $I$ is the ideal defining a rational normal curve in $\PP^n$ \cite[Example 7.3]{eisenbud}. Similarly, the bounds in Theorem~\ref{LST} are sharp in the nonstandard graded setting as well. Once again, taking $M = \kk$ gives an example of a strongly linear strand of length $\dim R_a(M)$ over any positively $A$-graded polynomial ring $S$. See Example~\ref{ex:cor} below for an example of a module $M$ over a nonstandard $\Z$-graded ring with strongly linear strand of length $\dim_\kk M_a - 1$; this is a weighted projective variant of \cite[Example 7.3]{eisenbud}.\end{remark}

We finally turn to the proof of Corollary~\ref{greencor variety}, which generalizes to toric varieties a result originally proven by Green \cite{green2} over projective space. 
\begin{proof}[Proof of Corollary~\ref{greencor variety}]
(1) is immediate, since $S$ is positively graded \cite[Example A.2]{tate}. (2) follows from essentially the same argument as in \cite[Corollary 7.4]{eisenbud}, but we give the details here. Let $m \in M_a$ and $w \in W$; recall that $W \subseteq S$ is the $\kk$-vector subspace of $S$ generated by the variables. Notice that $m \otimes w \in R_a(M)$ if and only if $m \otimes w_i \in R_a(M)$ for all homogeneous components $w_i$ of $w$. Assume $m \otimes w \in R_a(M)$ and that $w$ is homogeneous; by Theorem \ref{LST}, it suffices to show that this syzygy is trivial, i.e. either $m=0$ or $w=0$.  Suppose $m\ne 0$, and let $Q$ be a maximal ideal of $S$ such that the image $m_Q$ of $m$ in the localization $M_Q$ is nonzero. Let $w_Q$ denote the image of $w$ in $(S/P)_Q$. Notice that $M_Q$ is a submodule of a free $(S/P)_Q$-module; thus, since $w_Qm_Q = 0$, this forces $w_Q = 0$, and so $w \in P$. Since $P$ does not contain a homogeneous linear form, we conclude that $w = 0$.
\end{proof}

We also give a variant of Corollary~\ref{greencor} for toric stacks.\footnote{See \cite[\S 3.1]{tate} for our definition of a projective toric stack, as well as a discussion of the relationship between a toric stack and its corresponding toric variety.  When $X$ is smooth, there is no distinction; but as in other algebraic investigations of toric geometry, sheaves are generally better behaved on the toric stack than on the corresponding toric variety.}   As there can be vector bundles on a toric stack whose corresponding sheaves over the associated toric variety are not vector bundles (e.g. $\OO(1)$ over a weighted projective stack whose weights are not all 1), this extra level of generality might prove useful for some readers.
\begin{cor}
\label{greencor}
Let $X$ be a projective toric stack with Cox ring $S$ and $P\subseteq S$ a nondegenerate, homogeneous prime ideal with $Y \subseteq X$ the corresponding integral substack.   Let $\F$ be a vector bundle on $Y$ and $M$ a submodule of the $A = \Cl(X)$-graded $S$-module given by $\bigoplus_{d \in \Eff(S)} H^0 (Y, \F(d))$. 
\begin{enumerate}
\item $\Eff(M) = \{a \in A \text{ : } M_a \ne 0\}$ contains a minimal element with respect to the partial ordering described in Notation \ref{effective}.
\item Let $a \in \Eff(M)$ be a minimal element. The $a$-strongly linear strand (see Definition \ref{moregeneral}) of the minimal free resolution of $M$ has length at most $\dim_\kk(M_a) - 1$. 
\end{enumerate}
\end{cor}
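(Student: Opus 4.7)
The plan is to adapt the proof of Corollary~\ref{greencor variety} to the stack setting, where only minor modifications are required. Part (1) is immediate from positivity of the $A$-grading on $S$, via \cite[Example A.2]{tate}.

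For part (2), the strategy is to apply the Multigraded Linear Syzygy Theorem (Theorem~\ref{LST}), which bounds the length of the $a$-strongly linear strand by $\max\{\dim_\kk M_a - 1,\, \dim R_a(M)\}$, and then to show that $R_a(M)$ collapses to $\{0\}$. This reduces to proving that for any $m \in M_a$ and $w \in W$ with $wm = 0$ in $M$, either $m = 0$ or $w = 0$, which gives the desired bound $\dim_\kk M_a - 1$.

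To establish this triviality, assume $m \ne 0$ and, by passing to homogeneous components of $w$, reduce to the case where $w$ is homogeneous. The idea is to move the argument onto a smooth atlas, where it reduces to the variety situation. Using the standard groupoid presentation of the toric stack $X$ as a quotient of an open subscheme of $\Spec(S)$ by an algebraic group, the substack $Y$ lifts to the integral scheme $\Spec(S/P)$ (intersected with the appropriate open subset), and the vector bundle $\F$ on $Y$ pulls back to an equivariant vector bundle $\F'$ on this atlas. Under the standard equivalence between $\Cl(X)$-graded $S$-modules and equivariant quasicoherent sheaves on the atlas, the nonzero section of $\F(a)$ corresponding to $m$ pulls back to a nonzero section of $\F'$ twisted appropriately. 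Choosing a maximal ideal $Q \subseteq S$ with $P \subseteq Q$ at which this pulled-back section does not vanish, the stalk $(\F')_Q$ is a free $(S/P)_Q$-module, and the relation $wm = 0$ descends to $w_Q \cdot m_Q = 0$ in this free module. Since $(S/P)_Q$ is a domain (being a localization of the domain $S/P$) and $m_Q \ne 0$, this forces $w_Q = 0$, so $w \in P$. Nondegeneracy of $P$ ($P \subseteq \fm^2$) then forces $w = 0$.

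The main obstacle is making the reduction-to-atlas step precise for an arbitrary projective toric stack, including tracking the $\Cl(X)$-grading under pullback and verifying that a nonzero section on $Y$ yields a nonzero equivariant section on the atlas. Once this framework is in place (which is essentially bookkeeping within the standard toric stack formalism), the remainder of the argument is a direct transcription of the proof of Corollary~\ref{greencor variety}.
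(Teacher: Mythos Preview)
Your proposal is correct and follows the same approach as the paper; the paper's own proof of the stack version is literally the single line ``Same as the proof of Corollary~\ref{greencor variety}'', and your atlas-based elaboration is just an explicit unpacking of that identification, which the paper treats as automatic since the argument is purely module-theoretic over $S$.
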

\begin{proof}
Same as the proof of Corollary~\ref{greencor}.
\end{proof}

\begin{example}
\label{ex:cor}
Let $X$ be the weighted projective space $\P(1,1,1,2,2)$ and $C\subseteq X$ the curve defined by the $2\times 2$ minors of 
$
\footnotesize
\begin{pmatrix}
x_0&x_1&x_2^2&x_3\\
x_1&x_2&x_3&x_4
\end{pmatrix}.
$
\normalsize
One can check that $C$ is a smooth rational curve and $S/I_C$ is Cohen-Macaulay; in fact, it is closely related to some of the examples from~\cite[\S2]{Np}.  Let $M$ be the canonical module of $S/I_C$, so that $\widetilde{M}=\omega_C$.  The free resolution $F$ of $M$ has Betti table
\[
\footnotesize
\begin{matrix}
& 0 & 1 & 2 & 3\\
       1: & 3 & 4 & 1 & .\\
       2: & . & 4 & 4 & .\\
       3: & . & . & 1 & .\\
       4: & . & . & . & 1
 \end{matrix}
 \normalsize
\]
and a computation in \verb|Macaulay2| shows that the strongly linear strand of $F$ is:
\[
S(-1)^3 \xleftarrow{\left(\begin{smallmatrix}
       \vphantom{\left\{1\right\}}x_{0}&0&x_{1}&0&x_{3}&0\\
       \vphantom{\left\{1\right\}}0&x_{0}&-x_{2}&x_{1}&-x_{4}&x_{3}\\
       \vphantom{\left\{1\right\}}-x_{2}&-x_{1}&0&-x_{2}&0&-x_{4}
       \end{smallmatrix}\right)
      }
       S(-2)^4\oplus S(-3)^2 \xleftarrow{\left(\begin{smallmatrix}
       \vphantom{\left\{2\right\}}-x_{1}&0&x_{3}\\
       \vphantom{\left\{2\right\}}x_{2}&0&-x_{4}\\
       \vphantom{\left\{2\right\}}x_{0}&-x_{3}&0\\
       \vphantom{\left\{2\right\}}0&x_{4}&-x_{3}\\
       \vphantom{\left\{3\right\}}0&x_{1}&-x_{0}\\
       \vphantom{\left\{3\right\}}0&-x_{2}&x_{1}
       \end{smallmatrix}\right)}
        S(-3)^1 \oplus S(-4)^2 \gets 0.
\]
Note that this has length $2$, which is $\dim M_0-1$.
\end{example}



\bibliographystyle{amsalpha}
\bibliography{Bibliography}

\end{document}